\documentclass[reqno, 12pt]{article}

\pdfoutput=1

\usepackage{enumerate}
\usepackage{latexsym}
\usepackage[centertags]{amsmath}
\usepackage{amsfonts}
\usepackage{amssymb}
\usepackage{amsthm}
\usepackage{mathtools}
\usepackage{newlfont}
\usepackage{graphics}
\usepackage{color}
\usepackage{float}
\usepackage{diagbox}
\usepackage{tocloft}
\usepackage{titlesec}
\usepackage{booktabs}
\usepackage{extpfeil}
\usepackage{centernot}
\usepackage[pagebackref]{hyperref}
\usepackage[linesnumbered,ruled,vlined]{algorithm2e}
\usepackage{url}
\usepackage[T1]{fontenc}
\usepackage{lmodern}
\usepackage{hyperref}
\usepackage{microtype}
\usepackage[nameinlink,noabbrev,capitalize]{cleveref}
\usepackage{longtable}
\usepackage{rotating}
\usepackage{multirow}
\usepackage{extarrows}
\usepackage[sort,compress,numbers]{natbib}
\usepackage[utf8]{inputenc}
\numberwithin{equation}{section}

\newtheorem{theorem}{Theorem}[section]
\newtheorem{proposition}[theorem]{Proposition}
\newtheorem{lemma}[theorem]{Lemma}
\newtheorem{corollary}[theorem]{Corollary}
\theoremstyle{definition}
\newtheorem{definition}[theorem]{Definition}
\newtheorem{remark}[theorem]{Remark}
\newtheorem{example}[theorem]{Example}

\newtheorem{prob}[theorem]{Open Problem}

\allowdisplaybreaks[4]

\SetKwInput{KwInput}{Input}                
\SetKwInput{KwOutput}{Output}              

\newcommand{\Aff}{\mathbb A}
\newcommand{\FF}{\mathbb F}
\newcommand{\NN}{\mathbb{N}}
\newcommand{\ZZ}{\mathbb{Z}}
\newcommand{\CC}{\mathbb{C}}

\newcommand{\cA}{\mathcal{A}}

\newcommand{\cB}{\mathcal B}
\newcommand{\cD}{\mathcal D}

\newcommand{\gen}[1]{\left\langle #1\right\rangle}

\newcommand{\cond}{\operatorname{c}}
\newcommand{\Gap}{\operatorname{Gap}}
\newcommand{\remp}{\operatorname{rem}}

\title{On the Frobenius Number of Quotients of Numerical Semigroups}

\author{Feihu Liu
\\[2mm]
{\small Center for Combinatorics, LPMC}\\[-0.8ex]
{\small Nankai University, Tianjin 300071, P.R.~China}\\
{\small Email address: liufeihu7476@163.com}\\
}

\date{\today}

\begin{document}

\maketitle

\begin{abstract}
Given a numerical semigroup $S$ and a positive integer $p$, the quotient $\frac{S}{p}=\{n\in \mathbb{N} \mid pn\in S\}$ also forms a numerical semigroup. When $S=\langle a,b\rangle$ with $\gcd(a,b)=1$, a well-known open problem is to find a closed-form formula for the Frobenius number $g\!\left(\frac{\langle a,b\rangle}{p}\right)$, which remains open even in the special case $b=a+1$. Inspired by Curtis's theorem on the non-existence of polynomial formulas for the Frobenius number $g(\langle s_1,s_2,s_3\rangle)$, we provide a negative answer to this open problem in a certain sense. Concretely, we obtain the following three main results.

(i): The Frobenius number $g\!\left(\frac{\langle a,b\rangle}{p}\right)$ cannot be represented, uniformly in $a,b,p$, by any finite collection of polynomial (or rational) formulas.

(ii): For each fixed $p$, the function $a\mapsto g\!\left(\frac{\langle a,a+1\rangle}{p}\right)$ is a quadratic quasi-polynomial with period dividing $p$.

(iii): There is no nonzero polynomial $F\in \mathbb{C}[X_1,X_2,X_3]$ satisfying $F\left(a,p,g\!\left(\frac{\langle a,a+1\rangle}{p}\right)\right)=0$ for all primes $a,p$ with $2<p<a$; the same conclusion already holds if only $p$ is required to be prime and $a$ ranges over all integers greater than $p$.

While (iii) is stronger than (i), the proofs of the two results reveal different insights. Dirichlet's theorem on primes in arithmetic progressions plays a crucial role in our arguments.
\end{abstract}

\noindent
\begin{small}
\emph{2020 Mathematics subject classification}: Primary 11D07;  Secondary 20M14, 11A41, 05E16.
\end{small}

\noindent
\begin{small}
\emph{Keywords}: Numerical semigroup; Quotient of a numerical semigroup; Frobenius number; Dirichlet's theorem; Zariski density.
\end{small}

\tableofcontents

\section{Introduction}

Throughout the paper, $\mathbb{N}$ and $\ZZ_{>0}$ denote the sets of all non-negative integers and all positive integers, respectively.

A subset $S$ of $\mathbb{N}$ is called a \emph{submonoid} if it contains $0$ and is closed under addition in $\mathbb{N}$.
When $\mathbb{N}\setminus S$ is finite, we call $S$ a \emph{numerical semigroup}.
Given a sequence $A = (a_1, a_2, \ldots, a_k)$ of positive integers, we denote by $\langle A \rangle$ the submonoid of $\mathbb{N}$ generated by $A$, which consists of all finite linear combinations of elements of $A$ with non-negative integer coefficients:
$$\langle A \rangle = \left\{ x_1a_1+x_2a_2+\cdots+x_ka_k \mid x_i \in \mathbb{N}\quad \text{for}\quad 1 \leq i \leq k \right\}.$$
In \cite[Lemma 2.1]{J.C.Rosales}, it is shown that $\langle A\rangle$ forms a numerical semigroup if and only if $\gcd(A)=1$.
When this condition holds, we refer to $A$ as a \emph{system of generators} for the numerical semigroup $\langle A\rangle$.
For more descriptions and results about numerical semigroups, see \cite{Ramrez Alfonsn,A.Assi,J.C.Rosales}.

Let $S\neq\NN$ be a numerical semigroup.  Its set of \emph{gaps} is $\Gap(S):=\NN\setminus S$.
Since $\Gap(S)$ is finite and nonempty, its largest element exists.  The \emph{Frobenius number} of $S$ is $g(S):=\max\Gap(S)$.
The \emph{conductor} of $S$ is $\cond(S):=g(S)+1$.
Equivalently, $\cond(S)$ is the least integer $c$ such that every integer $n\ge c$ belongs to $S$; $g(S)$ is the greatest integer not belonging to $S$.
The \emph{genus} (or \emph{Sylvester number}) of $S$ is $\# \Gap(S)$, which is the cardinality of $\Gap(S)$.

Given a numerical semigroup $S$, computing $g(S)$ and $\Gap(S)$ is often referred to as the \emph{Frobenius Problem} or the \emph{Coin Exchange Problem}, see \cite{Ramrez Alfonsn}. The determination of $g(S)$ was shown by Ram\'irez Alfons\'in \cite{RamrezAlfonsn-Combin} to be NP-hard under Turing reduction.
This naturally attracts many scholars to study this invariant.
For example, see \cite{J. E. Shockley,LiuSemigroupF,Rosales2015,Rosales.Repunit,Rosales2016,E. S. Selmer}.

When $\gcd(a,b)=1$, Sylvester \cite{J.J.Sylvester,J.J.Sylvester1} established the following formulas 
\begin{align*}
g(\langle a,b\rangle) = ab - a - b \quad \text{and} \quad \# \Gap(\langle a,b\rangle) = \frac{1}{2}(a-1)(b-1).
\end{align*}
For the case $S=\langle a,b,c\rangle$ with $\gcd(a,b,c)=1$, Curtis \cite{F.Curtis} demonstrated that no closed-form expression of a certain type exists for $g(A)$. Nevertheless, numerous special cases have been investigated; we refer to \cite[Section 2; Section 3]{Ramrez Alfonsn} and \cite{Liu-Xin} for detailed results.

Let $S$ be a numerical semigroup and let $p\in\ZZ_{>0}$.  The \emph{quotient of $S$ by $p$} is
\[\frac{S}{p}:=\{n\in\NN:pn\in S\}.\]
This definition was introduced in \cite{Rosales03}.
It is easy to verify (see \cite[Section 5]{J.C.Rosales}) that $\frac{S}{p}$ is a numerical semigroup, that $S \subseteq\frac{S}{p}$, and that $\frac{S}{p}=\mathbb{N}$ if and only if $p\in S$. 
Although this quotient is again a numerical semigroup, its generators may have many elements, see \cite{Rosales05,Rosales2006,E.Cabanillas,Alessio19,Liu-BAMS}.

When $S=\langle a,b\rangle$ with $\gcd(a,b)=1$, a natural problem is to determine explicit formulas for the Frobenius number $g\left(\frac{\langle a,b\rangle}{p}\right)$ and the genus $\# \Gap\left(\frac{\langle a,b\rangle}{p}\right)$.
If $p$ is a positive divisor of $a$, then $\frac{\langle a,b\rangle}{p}=\langle \frac{a}{p},b\rangle$ (see \cite{Rosales05}). Therefore, we have
\begin{align*}
g\left(\frac{\langle a,b\rangle}{p}\right)=\frac{ab}{p}-\frac{a}{p}-b\quad \text{and}\quad \# \Gap\left(\frac{\langle a,b\rangle}{p}\right)=\frac{1}{2p}(a-p)(b-1)\quad \text{for}\quad p\mid a.
\end{align*}

However, for general $p$, this problem remains open and has been mentioned repeatedly in the literature. 
In this paper, we mainly focus on Frobenius number $g\left(\frac{\langle a,b\rangle}{p}\right)$.
\begin{prob}{(\cite[Open Problem 6.20]{J.C.Rosales}, \cite[Section 3]{MDelgado13}, \cite[Section 5.4]{E.Cabanillas})}\label{Openproblem}
Let $a$ and $b$ be two positive relatively prime integers and let $p\in\ZZ_{>0}$.
Find a closed-form formula for $g\left(\frac{\langle a,b\rangle}{p}\right)$.
Furthermore, what happens when $b=a+1$?
\end{prob}

For the case $p=2$ and the case $p=5$, $b=a+1$, the closed formulas for the Frobenius number $g\left(\frac{\langle a,b\rangle}{p}\right)$ is derived in \cite{F.Strazzanti}.
When the generators of $S$ form an arithmetic progression, i.e., $S=\langle a,a+d,\ldots,a+kd\rangle$ ($\gcd(a,d)=1$), and $p$ is a positive divisor of $a$, the corresponding formula for the Frobenius number $g\left(\frac{S}{p}\right)$ is studied in \cite{A.Adeniran} and \cite{Liu-IJAC}. Recently, some related developments can be found in \cite{Liu-GCNSNS}.

The motivation of this paper comes from the paper \cite{F.Curtis} in which Curtis proved that the $g(\langle a,b,c\rangle)$ can not be given by closed formulas of a certain type.
Inspired by Curtis's work and prompted by Open Problem \ref{Openproblem}, we also establish several polynomial nonexistence theorems concerning formulas of $g\left(\frac{\langle a,b\rangle}{p}\right)$.

Define
\begin{align*}
  \cA:=\bigl\{(a,b,p)\in\ZZ_{>0}^{3}: \ 2<p<a<b,\quad p\text{ and }a\text{ are prime},\quad  \gcd(a,b)=1,\quad p\nmid b\bigr\}.
\end{align*}

The \textbf{first} main contribution of this paper is as follows.
\begin{theorem}\label{thm:main}
There is no nonzero polynomial $F\in\CC[X_1,X_2,X_3,Y]$ such that
\begin{equation}\label{eq:main-relation}
  F\left(a,b,p,
  g\!\left(\frac{\gen{a,b}}{p}\right)\right)=0
\end{equation}
for every $(a,b,p)\in\cA$.
\end{theorem}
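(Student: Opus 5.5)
The plan is to restrict to a subfamily of $\cA$ on which $g\!\left(\frac{\gen{a,b}}{p}\right)$ is given by an explicit polynomial in $a,b$ depending on residue data. I would then use Dirichlet's theorem to make this subfamily Zariski dense in the $(a,b)$-plane. The final step is to show that, for a fixed prime $p$, $F(X_1,X_2,p,Y)$ would need too many distinct roots in $Y$.

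\textbf{Step 1 (explicit formula on a cone).} By Sylvester, the integers not in $\gen{a,b}$ are exactly the numbers $ab-xa-yb$ with $x,y\ge 1$ that are nonnegative. Hence
\[
g\!\left(\frac{\gen{a,b}}{p}\right)=\max\Bigl\{\tfrac{ab-xa-yb}{p}:\ x,y\ge 1,\ ab-xa-yb\ge 0,\ p\mid ab-xa-yb\Bigr\}.
\]
So we must minimize $xa+yb$ subject to $xa+yb\equiv ab \pmod p$. Since $p\nmid a$, for every $y$ there is a unique admissible $x\in\{1,\dots,p\}$. Now assume additionally $b>pa$. Then any choice with $y\ge 2$ costs at least $2b>b+pa$, which exceeds the cost of the choice $y=1$. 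Therefore the optimum is $y=1$ with $x=x_0\in\{1,\dots,p\}$ determined by $x_0a\equiv ab-b\pmod p$. Writing $r\equiv a$ and $s\equiv b \pmod p$, this gives $x_0\equiv s(1-r^{-1})\pmod p$, so $x_0$ depends only on $(r,s)$. We need $ab-x_0a-b\ge 0$, which holds for large $a$. On this cone we get
\[
p\,g\!\left(\frac{\gen{a,b}}{p}\right)=ab-x_0a-b .
\]

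\textbf{Step 2 (density).} Fix a prime $p\ge 3$ and residues $r\notin\{0,1\}$, $s\neq 0$ modulo $p$. By Dirichlet's theorem there are infinitely many primes $a\equiv r\pmod p$ with $a>p$. For each such $a$, there are infinitely many $b>pa$ with $b\equiv s\pmod p$ and $\gcd(a,b)=1$, so all these triples $(a,b,p)$ lie in $\cA$. A polynomial in two variables that vanishes on a set with infinitely many first coordinates, each carrying infinitely many second coordinates, is zero. Suppose $F$ satisfies \eqref{eq:main-relation}. Then $G(X_1,X_2):=F\bigl(X_1,X_2,p,(X_1X_2-x_0X_1-X_2)/p\bigr)$ vanishes identically. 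Equivalently, $Y=(X_1X_2-x_0X_1-X_2)/p$ is a root of $F_p(Y):=F(X_1,X_2,p,Y)\in\CC[X_1,X_2][Y]$.

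\textbf{Step 3 (counting roots, then varying $p$).} Fix $r=2$ (allowed since $p\ge3$) and let $s$ run over the nonzero residues. Then $x_0\equiv s(1-r^{-1})$ runs over all nonzero residues, so $x_0$ takes all values in $\{1,\dots,p-1\}$. These give $p-1$ pairwise distinct roots $(X_1X_2-x_0X_1-X_2)/p$ of $F_p$ in $\CC(X_1,X_2)$. Let $d=\deg_Y F$. For every prime $p>d+1$, $F_p$ has more roots than its degree, so $F(X_1,X_2,p,Y)=0$ as a polynomial. Each coefficient of $F$, viewed as a polynomial in $X_3$ with coefficients in $\CC[X_1,X_2,Y]$, then vanishes at infinitely many primes $p$, so $F=0$, a contradiction.

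\textbf{Main obstacle.} The delicate point is Step 1: proving rigorously that $y=1$ is optimal when $b>pa$, and checking that the nonnegativity constraint does not interfere. A secondary check is Step 2: verifying that the chosen subfamily is genuinely contained in $\cA$ and is Zariski dense, which is where Dirichlet's theorem is needed because $a$ must be prime.
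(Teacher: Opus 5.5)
Your proof is correct, and it takes a different and more elementary route than the paper.

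\textbf{The paper's route.} The paper works in thin slope strips $p-k<b/a<p-k+1$ with $a\equiv1\pmod p$ and $b\equiv p-k+1\pmod p$. There the symmetric-semigroup formula $g(S/p)=(F-\omega_p(S))/p$ yields $\bigl(ab-(k-1)a-2b\bigr)/p$. Because each strip is a narrow cone of bounded slope, the paper needs two further lemmas to turn vanishing at sample points into a polynomial identity: an approximation lemma that picks $b_n$ by nearest-integer rounding along primes $a_n\equiv1\pmod p$, and a Curtis-style ``points at infinity'' lemma (the top homogeneous component must vanish along every slope in an interval). This produces only $(p-1)/2$ branches.

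\textbf{Your route.} You work in the unbounded cone $b>pa$. There Sylvester's description of the gaps as $ab-xa-yb$ with $x,y\ge1$ makes $y=1$ optimal at once, and you index the branches by the residue of $b$ rather than by slope. This gives $p-1$ distinct roots $(X_1X_2-x_0X_1-X_2)/p$. Zariski density of your sample set is immediate: each prime $a\equiv2\pmod p$ has infinitely many admissible $b$ by the Chinese remainder theorem, so no asymptotic argument is needed. Dirichlet's theorem plays the same role in both proofs, namely making $a$ prime in a fixed residue class.

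\textbf{What each buys.} The paper's approach mirrors Curtis's method more closely. It also shows that, for fixed $p$, the fragmentation into many branches already occurs in the bounded-slope region $b/a<p-1$. Your approach is shorter and gives a better degree bound, $p-1$ roots instead of $(p-1)/2$.

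\textbf{Two cosmetic points.} First, once $b>pa$, the inequality $ab-x_0a-b\ge0$ holds for every $a\ge2$, not only for large $a$, since $b(a-1)\ge b>pa\ge x_0a$. Second, in Step 3 take primes $p>\max(d+1,2)$, so that $p\ge3$ as required both by $\cA$ and by your choice $r=2$.
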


The theorem rules out not merely a single polynomial expression for the Frobenius number, but every algebraic relation of bounded degree with coefficients independent of the parameters.  Its immediate corollary is the following.

\begin{corollary}\label{cor:finite-poly-intro}
There do not exist finitely many polynomials $f_1,\ldots,f_m\in\CC[X_1,X_2,X_3]$ with the property that, for every $(a,b,p)\in\cA$, at least one index $i\in\{1,\ldots,m\}$ satisfies
\[f_i(a,b,p)=g\!\left(\frac{\gen{a,b}}{p}\right).
\]
In particular, no such finite family can work for all relatively prime positive integers $a<b$ and all positive integers $p<a$.
\end{corollary}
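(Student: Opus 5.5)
We derive the corollary directly from Theorem~\ref{thm:main} by turning a finite family of candidate formulas into one algebraic relation. Suppose, for contradiction, that $f_1,\ldots,f_m\in\CC[X_1,X_2,X_3]$ have the stated property. First we note that $m\ge 1$, since $\cA$ is nonempty (for instance $(a,b,p)=(5,7,3)\in\cA$), so a nonempty index set is required. We then set
\[
F(X_1,X_2,X_3,Y):=\prod_{i=1}^{m}\bigl(Y-f_i(X_1,X_2,X_3)\bigr)\in\CC[X_1,X_2,X_3,Y].
\]
Viewed as a polynomial in $Y$ with coefficients in $\CC[X_1,X_2,X_3]$, $F$ is monic of degree $m\ge1$. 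Hence $F$ is nonzero.

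Next we check the relation \eqref{eq:main-relation}. Fix $(a,b,p)\in\cA$. By hypothesis there is an index $i$ with $f_i(a,b,p)=g\!\left(\frac{\gen{a,b}}{p}\right)$. So the $i$-th factor of $F$ vanishes when we substitute $X_1=a$, $X_2=b$, $X_3=p$ and $Y=g\!\left(\frac{\gen{a,b}}{p}\right)$. Therefore $F\bigl(a,b,p,g(\frac{\gen{a,b}}{p})\bigr)=0$ for every $(a,b,p)\in\cA$. This contradicts Theorem~\ref{thm:main}, so no such family exists.

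For the ``in particular'' clause, suppose a finite family worked for all relatively prime positive integers $a<b$ and all positive integers $p<a$. Every triple in $\cA$ satisfies these conditions: $\gcd(a,b)=1$, $a<b$, and $p<a$. So the same family would work on $\cA$, which we have just ruled out.

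There is no real obstacle here. The only point needing care is that $F$ is genuinely nonzero, and this is guaranteed by monicity in $Y$ once $m\ge1$. All of the depth lies in Theorem~\ref{thm:main} itself.
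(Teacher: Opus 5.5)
Your proof is correct and is essentially the paper's argument: you form $\prod_{i=1}^m\bigl(Y-f_i(X_1,X_2,X_3)\bigr)$, which is monic in $Y$ and hence nonzero, note that it vanishes at every point of the graph over $\cA$, and so contradict Theorem~\ref{thm:main}; the ``in particular'' clause then follows because $\cA$ is contained in the larger parameter set. Your explicit check that $m\ge 1$, using $(5,7,3)\in\cA$, is a small detail the paper leaves implicit.
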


At this point, a natural idea is to consider the formula for the Frobenius number $g\!\left(\frac{\gen{a,a+1}}{p}\right)$ under the stronger condition $b=a+1$.
For convenience, we set 
$$S_a=\gen{a,a+1}\quad \text{and} \quad G(a,p)=g\left(\frac{S_a}{p}\right).$$
Since $\gcd(a,a+1)=1$, this is a numerical semigroup for every $a\ge2$.
The question is whether the restriction $b=a+1$ makes the Frobenius number algebraically simpler.

For an integer $u$, let $\remp_p(u)$ denote the unique integer in
$\{0,1,\ldots,p-1\}$ congruent to $u$ modulo $p$.  For
$r\in\{0,1,\ldots,p-1\}$ and $0\le i\le p-1$, define
\begin{align*}
\delta_{p,r}(i) :=\remp_p\bigl(r^2-r-1-ir\bigr).
\end{align*}
We shall prove that the following minimum is always defined:
\begin{align*}
   I_p(r)&:=\min\{i\in\{0,\ldots,p-1\}:\delta_{p,r}(i)\le i\},\\
   J_p(r)&:=\delta_{p,r}\bigl(I_p(r)\bigr).
\end{align*}

Our \textbf{second} main result gives an exact formula on every residue class modulo $p$.  

\begin{theorem}\label{thm:intro-exact}
Let $a$ and $p$ be integers satisfying $a>p\ge2$, and let
$r=\remp_p(a)$.  Then
\begin{align*}
  G(a,p)=\frac{a^2-(I_p(r)+1)a-(J_p(r)+1)}{p}.
\end{align*}
In particular, for every fixed $p$, the function $a\mapsto G(a,p)$ on
$a>p$ is a quadratic quasi-polynomial with period dividing $p$.
\end{theorem}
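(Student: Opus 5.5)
The plan is to describe the gaps of $S_a=\gen{a,a+1}$ explicitly as a union of consecutive integer blocks. Then I find the largest multiple of $p$ lying in one of these blocks. Since $G(a,p)=\max\{n\in\NN: pn\notin S_a\}$, we have $p\,G(a,p)$ equal to the largest multiple of $p$ in $\Gap(S_a)$. This largest multiple exists because $a>p\ge 2$ gives $p\notin S_a$, so $\frac{S_a}{p}\neq\NN$.

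\textbf{Step 1 (structure of $S_a$).} An element $xa+y(a+1)$ equals $ka+j$ with $k=x+y$ and $j=y\le k$. Conversely, any $ka+j$ with $0\le j\le k$ lies in $S_a$. So for $0\le j\le a-1$, the integer $m=ka+j$ lies in $S_a$ if and only if $j\le k$. Hence
\[
\Gap(S_a)=\bigcup_{k=0}^{a-2}\bigl[k(a+1)+1,\ (k+1)a-1\bigr].
\]
Reindex with $k=a-2-i$, where $0\le i\le a-2$. Block number $i$ then consists of the $i+1$ integers $T_i-t$ for $0\le t\le i$, where
\[
T_i=a^2-(i+1)a-1 .
\]
The blocks are pairwise disjoint. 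Block $i$ lies entirely above block $i+1$, since the minimum of block $i$ is $(a-2-i)(a+1)+1$, which exceeds $T_{i+1}=(a-2-i)a-1$.

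\textbf{Step 2 (multiples of $p$ in a block).} Let $r=\remp_p(a)$. Then $T_i\equiv r^2-r-1-ir \pmod p$. So $T_i-t\equiv 0\pmod p$ holds exactly when $t\equiv \delta_{p,r}(i)\pmod p$. Because $0\le\delta_{p,r}(i)\le p-1$, block $i$ contains a multiple of $p$ if and only if $\delta_{p,r}(i)\le i$. In that case its largest multiple of $p$ is $T_i-\delta_{p,r}(i)$, attained at the smallest admissible $t$.

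\textbf{Step 3 (the first good block).} For $i=p-1$ we have $\delta_{p,r}(p-1)\le p-1$ automatically, so $I_p(r)\le p-1$ is well defined. Moreover $p-1\le a-2$ because $a>p$, so block $p-1$ genuinely occurs. The blocks decrease as $i$ increases. Therefore the largest multiple of $p$ among the gaps comes from the first block containing one, namely $i=I_p(r)$, and it equals $T_{I_p(r)}-J_p(r)$. Dividing by $p$ gives the stated formula.

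\textbf{Step 4 (quasi-polynomial).} Since $I_p(r)$ and $J_p(r)$ depend only on $r=\remp_p(a)$, the formula is a quadratic polynomial in $a$ on each residue class modulo $p$. Hence $a\mapsto G(a,p)$ is a quadratic quasi-polynomial with period dividing $p$.

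The only delicate point is the bookkeeping in Step 1: checking the block endpoints and their disjoint, decreasing ordering. I also need the constraint $i\le a-2$, which is exactly where the hypothesis $a>p$ enters.
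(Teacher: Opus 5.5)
Your proof is correct, and it takes a more direct and more elementary route than the paper. The paper never looks for multiples of $p$ among the gaps. Instead it uses the symmetry of $\gen{a,a+1}$ together with the cited residue-minimum formula $g(S/p)=(F-\omega_p(S))/p$ (Proposition~\ref{prop:quotient-residue}). This reduces the problem to computing $w(a,p)=\min\{s\in\gen{a,a+1}: s\equiv F_a \pmod p\}$. The paper then shows $w(a,p)=I_p(r)a+J_p(r)$ by writing an admissible $s=qa+v$ with $v\le q$ and treating $q<I_p(r)$, $q=I_p(r)$ and $q>I_p(r)$ separately.

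Your blocks are exactly the images of the paper's rows $\{ia+t:0\le t\le i\}$ under the reflection $\Phi_a$ of Proposition~\ref{prop:reflection-param}, since $T_i-t=F_a-ia-t$. So the two arguments correspond step by step:
\begin{itemize}
\item your ``largest multiple of $p$ in the first good block'' is the mirror image of the paper's ``smallest admissible $ia+t$'';
\item your observation that blocks $i<I_p(r)$ contain no multiple of $p$ is the paper's case $q<I_p(r)$;
\item your block ordering replaces the paper's case $q>I_p(r)$, where $J_p(r)\le p-1<a$ is used.
\end{itemize}

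Your version is self-contained: it needs neither symmetry nor Strazzanti's result, only the explicit gap set. The paper's version gives a framework that does not depend on the block structure of $\gen{a,a+1}$, and it reuses that framework for general $\gen{a,b}$ in Proposition~\ref{prop:explicit-branch}.

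There is one small bookkeeping point in your Step 1. From $m=(x+y)a+y$ you still have to pass to the Euclidean form when $y\ge a$. Writing $y=ua+j'$ gives quotient $x+y+u\ge j'$, as in Lemma~\ref{lem:membership}; this is a one-line fix.
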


Although there are at most $p$ residue-class branches, their number is not bounded independently of $p$.
Thus, we can still derive a theorem on the non-existence of a polynomial formula for $G(a,p)$.
Define
\begin{align}
 \cD_{\mathrm{all}}&:=\{(a,p)\in\ZZ_{>0}^2:p\text{ is prime and }p<a\},\label{eq:Dall}\\
 \cD_{\mathrm{pp}}&:=\{(a,p)\in\ZZ_{>0}^2:a,p\text{ are prime and }2<p<a\}.\label{eq:Dpp}
\end{align}

Our \textbf{third} main result is as follows. Although Theorem \ref{thm:intro-reduced} is stronger than Theorem \ref{thm:main}, their proofs reflect different ideas.

\begin{theorem}\label{thm:intro-reduced}
There is no nonzero polynomial $F\in\CC[X_1,X_2,X_3]$ such that $F\bigl(a,p,G(a,p)\bigr)=0$ for every $(a,p)\in\cD_{\mathrm{all}}$. 
In fact, the same conclusion holds with $\cD_{\mathrm{all}}$ replaced by the thinner set $\cD_{\mathrm{pp}}$.
\end{theorem}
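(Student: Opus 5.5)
The plan is to reduce everything to the exact formula of Theorem~\ref{thm:intro-exact}, using Dirichlet's theorem twice, and then to argue by Zariski density that $F$ would have to vanish on infinitely many distinct irreducible surfaces. Suppose $F\neq0$ satisfies $F(a,p,G(a,p))=0$ on $\cD_{\mathrm{pp}}$.

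\emph{Step 1: fix $p$ and a residue class.} Fix an odd prime $p$ and a residue $r\in\{1,\ldots,p-1\}$. By Theorem~\ref{thm:intro-exact}, every $a>p$ with $a\equiv r \pmod p$ satisfies
\[
G(a,p)=\frac{a^2-(I_p(r)+1)a-(J_p(r)+1)}{p}.
\]
Since $\gcd(r,p)=1$, Dirichlet's theorem gives infinitely many primes $a>p$ in this class. Hence the one-variable polynomial
\[
F\Bigl(X_1,\,p,\,\tfrac{X_1^2-(I_p(r)+1)X_1-(J_p(r)+1)}{p}\Bigr)
\]
has infinitely many roots, so it vanishes identically in $X_1$.

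\emph{Step 2: make $I_p(r)$ and $J_p(r)$ affine in $p$.} Fix an integer $k\ge2$ and take $r=k$. I would compute $I_p(k)$ and $J_p(k)$ explicitly for all large primes $p$ lying in a fixed residue class modulo $k$ (say $p\equiv1\pmod k$). The goal is to show that they are affine functions of $p$:
\[
I_p(k)=\alpha_k p+\beta_k,\qquad J_p(k)=\gamma_k p+\varepsilon_k,
\]
with rational constants, and with the slopes $\alpha_k$ pairwise distinct as $k$ varies (heuristically $\alpha_k\approx 1/k$). The easy cases show the pattern:
\begin{itemize}
\item for $r=1$, one has $\delta_{p,1}(i)=p-1-i$, so $I=J=(p-1)/2$;
\item for $r=p-1$, one has $I=p-1$ and $J=0$.
\end{itemize}
For general fixed $k$, the values $k^2-k-1-ik$ wrap around modulo $p$ roughly every $p/k$ steps of $i$. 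Locating the first $i$ with $\delta_{p,k}(i)\le i$ is then an elementary but careful case analysis, which becomes uniform once $p$ is fixed modulo $k$. I expect this computation to be the main obstacle. If clean formulas for all $k$ prove messy, it suffices to find any infinite family of residues (possibly $r=(p+1)/2$, or $r$ chosen depending on $p$ in a controlled way) on which $(I,J)$ is affine in $p$ with infinitely many distinct slopes.

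\emph{Step 3: Zariski density.} For fixed $k$, Dirichlet's theorem gives infinitely many primes $p\equiv1\pmod k$. Step 1 applies to each such $p$, so the polynomial
\[
H_k(X_1,X_2):=X_2^{\deg F}\,F\Bigl(X_1,X_2,\tfrac{X_1^2-(\alpha_kX_2+\beta_k+1)X_1-(\gamma_kX_2+\varepsilon_k+1)}{X_2}\Bigr)
\]
vanishes for infinitely many values $X_2=p$, identically in $X_1$. Therefore $H_k\equiv0$. This means $F$ vanishes on the surface
\[
\Sigma_k:\;X_2Y=X_1^2-(\alpha_kX_2+\beta_k+1)X_1-(\gamma_kX_2+\varepsilon_k+1).
\]
Each $\Sigma_k$ is irreducible, since its equation has degree $1$ in $Y$ with coprime coefficients. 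Distinct slopes $\alpha_k$ give pairwise distinct surfaces. Hence $F$ is divisible by the defining polynomial of every $\Sigma_k$. Infinitely many distinct irreducible factors are impossible for a nonzero polynomial, so $F=0$, a contradiction.

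Since $\cD_{\mathrm{pp}}\subseteq\cD_{\mathrm{all}}$, this proves both assertions of Theorem~\ref{thm:intro-reduced}.
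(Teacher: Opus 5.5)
\paragraph{Gap in Step 2.} Steps 1 and 3 are sound, but Step 2 carries the whole argument, and you have not proved it. As formulated it is also false: fixing $p$ modulo $k$ does not make $I_p(k)$ affine in $p$, because the relevant modulus is $k+1$.

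Here is the computation. Take $p>k^2+k$. For $i\le k-2$ one has $\delta_{p,k}(i)=k(k-1-i)-1>i$. For $i=k-1+m$ with $m\ge0$ one has $\delta_{p,k}(i)=p-1-km$, which is $\le i$ exactly when $(k+1)m\ge p-k$. Hence
\[
I_p(k)=k-1+\Bigl\lceil\frac{p-k}{k+1}\Bigr\rceil,\qquad J_p(k)=p-1-k\Bigl\lceil\frac{p-k}{k+1}\Bigr\rceil ,
\]
and this depends on $\remp_{k+1}(p)$. For instance, with $k=2$ your class $p\equiv1\pmod 2$ is all odd primes, and $I_7(2)=3$, $I_{11}(2)=4$, $I_{13}(2)=5$ do not lie on one affine function. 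The slope is $1/(k+1)$, not roughly $1/k$.

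The repair is to restrict to primes $p\equiv1\pmod{k+1}$, which is a second use of Dirichlet's theorem. On that class
\[
I_p(k)=k-1+\frac{p-1}{k+1},\qquad J_p(k)=\frac{p-1}{k+1}.
\]
The slopes $1/(k+1)$ are pairwise distinct, so your Step 3 then goes through. The identity $F(X_1,X_2,Q_k/X_2)=0$ in $\CC(X_1,X_2)$, together with Gauss's lemma ($X_2Y-Q_k$ is primitive because $X_2\nmid Q_k$), gives infinitely many pairwise non-associate factors of $F$. With this correction your proof is complete.

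\paragraph{Comparison with the paper.} The paper never evaluates $I_p(r)$ or $J_p(r)$. It fixes one prime $p$ with $F(X_1,p,Y)\neq0$ (true for all but finitely many $p$) and $\frac{p-1}{2}>\deg_YF$. It then uses exactly your Step 1 to see that every branch $P_{p,r}$ with $r\neq0$ is a root of $F(X_1,p,Y)\in\CC(X_1)[Y]$.

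It bounds the number of distinct branches from below by a counting trick. If $(I_p(r),J_p(r))=(i,j)$, then $r$ is a root of $Z^2-(i+1)Z-(j+1)$ in $\FF_p$, so each pair comes from at most two residues. This gives at least $\frac{p-1}{2}$ distinct roots, more than the $Y$-degree allows. That route needs Dirichlet only for $a$, plus a specialization lemma.

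Your repaired route uses Dirichlet twice and requires the explicit evaluation above, but no specialization lemma. In return it gives explicit branch formulas valid uniformly in $p$ along arithmetic progressions, which is extra information about $G$.
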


The assertion for $\cD_{\mathrm{all}}$ is elementary once Theorem~\ref{thm:intro-exact} is established.
The stronger assertion for $\cD_{\mathrm{pp}}$ uses Dirichlet's theorem on primes in arithmetic progressions.

\begin{corollary}\label{Corollary-II-PP}
There do not exist finitely many polynomials $f_1,\ldots,f_m\in\CC[X_1,X_2]$ such that, for every
$(a,p)\in\cD_{\mathrm{pp}}$, one has $G(a,p)=f_i(a,p)$ for at least one index $i$.  The same statement holds for finite collections
of rational functions in $\CC(X_1,X_2)$, provided the selected rational function is defined at the point under consideration.
Consequently, no finite collection of polynomial or rational formulas can describe $G(a,p)$ for all integers $a>p\ge2$.
\end{corollary}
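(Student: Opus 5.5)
We deduce the corollary from Theorem~\ref{thm:intro-reduced}, exactly as Corollary~\ref{cor:finite-poly-intro} follows from Theorem~\ref{thm:main}: a finite family of formulas would produce a forbidden polynomial relation.

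\textbf{Polynomial case.} Suppose $f_1,\ldots,f_m\in\CC[X_1,X_2]$ have the stated property. Put
\[
F(X_1,X_2,X_3):=\prod_{i=1}^m\bigl(X_3-f_i(X_1,X_2)\bigr)\in\CC[X_1,X_2,X_3].
\]
This polynomial is nonzero, since it is monic of degree $m$ in $X_3$. For each $(a,p)\in\cD_{\mathrm{pp}}$, some factor vanishes at $(a,p,G(a,p))$. Hence $F(a,p,G(a,p))=0$ on all of $\cD_{\mathrm{pp}}$, which contradicts Theorem~\ref{thm:intro-reduced}.

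\textbf{Rational case.} Write $f_i=P_i/Q_i$ with $P_i,Q_i\in\CC[X_1,X_2]$ and $Q_i\neq0$. Take
\[
F:=\prod_{i=1}^m\bigl(Q_i(X_1,X_2)X_3-P_i(X_1,X_2)\bigr).
\]
Each factor is nonzero because its $X_3$-coefficient is $Q_i\neq0$, so $F$ is nonzero since $\CC[X_1,X_2,X_3]$ is a domain. Let $(a,p)\in\cD_{\mathrm{pp}}$, and let $i$ be the selected index. Then $f_i$ is defined at $(a,p)$ and equals $G(a,p)$ there. Being defined, with the representation chosen this way, means $Q_i(a,p)\neq0$ and $G(a,p)=P_i(a,p)/Q_i(a,p)$. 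Therefore the $i$-th factor vanishes and $F(a,p,G(a,p))=0$, again contradicting Theorem~\ref{thm:intro-reduced}.

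The only point needing care is the convention for ``defined.'' We fix a reduced representation of each $f_i$, which is possible since $\CC[X_1,X_2]$ is a UFD, and interpret ``defined at $(a,p)$'' as $Q_i(a,p)\neq0$. Any other representation with nonvanishing denominator at $(a,p)$ gives the same value, so the factor built from the reduced form still vanishes. Alternatively, one can use whichever representation witnesses definedness at each point, since every such representation yields a factor that vanishes there.

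\textbf{Final consequence.} Since $\cD_{\mathrm{pp}}\subseteq\{(a,p):a>p\ge2\}$, a finite collection of polynomial or rational formulas valid for all integers $a>p\ge2$ would in particular be valid on $\cD_{\mathrm{pp}}$. This is impossible by the two cases above.

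There is no real obstacle here. All the depth lies in Theorem~\ref{thm:intro-reduced}, which is assumed.
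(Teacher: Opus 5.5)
Your proposal is correct and matches the paper's proof: it also forms $\prod_i\bigl(Y-f_i\bigr)$ and $\prod_i\bigl(B_iY-A_i\bigr)$, contradicts the $\cD_{\mathrm{pp}}$ case of Theorem~\ref{thm:intro-reduced} (proved as Theorem~\ref{thm:global-prime-prime}), and extends to all integers $a>p\ge2$ via the inclusion $\cD_{\mathrm{pp}}\subseteq\{(a,p):a>p\ge2\}$. Your extra care about what ``defined'' means is a harmless refinement that the paper leaves implicit.
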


The paper is organized as follows. Section~\ref{Section-Two-Elementary} gives a complete description of membership and
gaps in $\gen{a,b}$ and $\gen{a,a+1}$. We also present some basic properties and results that will be needed later.
Section~\ref{Section-Three-FN} derives a closed-form formula for the Frobenius number of the quotient of a certain class of numerical semigroups.
In Section~\ref{Section-Four-TheoI}, we mainly use Dirichlet's theorem to prove Theorem \ref{thm:main} and Corollary \ref{cor:finite-poly-intro}. The proof is technical.
Section~\ref{sec:exact} proves the exact residue-class formula in Theorem~\ref{thm:intro-exact}.
Section~\ref{sec:global} proves Theorem \ref{thm:intro-reduced} and Corollary \ref{Corollary-II-PP}, and discusses the Zariski density. Section~\ref{Section-Finily-CR} contains the concluding remarks.

\section{Elementary properties of quotient semigroups}\label{Section-Two-Elementary}

We give a completely explicit description of membership in $S=\gen{a,b}$.

\begin{lemma}\label{lem:membership2}
Let $a,b\in\ZZ_{>0}$ satisfy $\gcd(a,b)=1$, and let $S=\gen{a,b}$.  For every $n\in\NN$, there is a unique $r\in\{0,1,\ldots,a-1\}$ such that
\begin{equation}\label{eq:residue-r}
  n\equiv rb\pmod a.
\end{equation}
For this integer $r$, one has $n\in S \Longleftrightarrow n\ge rb$.
\end{lemma}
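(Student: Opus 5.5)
The proof splits into two parts: existence and uniqueness of $r$, and then the membership criterion $n\in S\Longleftrightarrow n\ge rb$.

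\emph{Existence and uniqueness of $r$.} Since $\gcd(a,b)=1$, $b$ is invertible modulo $a$, so multiplication by $b$ permutes the residues $\ZZ/a\ZZ$. Hence the map $r\mapsto rb \bmod a$ is a bijection of $\{0,1,\ldots,a-1\}$ onto the residue classes modulo $a$. Exactly one $r$ in this range therefore satisfies \eqref{eq:residue-r}.

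\emph{The implication $n\ge rb \Rightarrow n\in S$.} Suppose $n\ge rb$. By \eqref{eq:residue-r}, $n-rb$ is a nonnegative multiple of $a$, say $n-rb=ka$ with $k\in\NN$. Then $n=ka+rb\in S$.

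\emph{The implication $n\in S \Rightarrow n\ge rb$.} Suppose $n\in S$, and write $n=xa+yb$ with $x,y\in\NN$. Reducing modulo $a$ gives $yb\equiv n\equiv rb \pmod a$, and cancelling the unit $b$ yields $y\equiv r\pmod a$. Since $y\ge 0$ and $0\le r\le a-1$, this forces $y\ge r$: indeed $y=r+ta$ with $t\in\ZZ$, and $t<0$ would make $y\le r-a<0$. Consequently $n=xa+yb\ge yb\ge rb$.

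The only step needing any care is the last inequality. It uses that $r$ is the \emph{least} nonnegative representative of the congruence class of $y$ modulo $a$, so any nonnegative $y$ in that class is at least $r$. Everything else is routine modular arithmetic.
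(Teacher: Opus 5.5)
Your proposal is correct and follows essentially the same route as the paper. You get existence and uniqueness of $r$ from invertibility of $b$ modulo $a$, prove the sufficiency direction by writing $n-rb$ as a nonnegative multiple of $a$, and prove the necessity direction by showing that the coefficient of $b$ satisfies $y=r+ta$ with $t\ge 0$.
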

\begin{proof}
Because $\gcd(a,b)=1$, multiplication by $b$ is a bijection on the residue
classes modulo $a$.  Hence there is a unique
$r\in\{0,1,\ldots,a-1\}$ satisfying~\cref{eq:residue-r}.

Assume first that $n\in S$.  Then there exist $u,v\in\NN$ such that $n=ua+vb$.
Reducing this equality modulo $a$ and comparing with~\cref{eq:residue-r},
we obtain $vb\equiv rb\pmod a$.
Since $b$ is invertible modulo $a$, it follows that $v\equiv r\pmod a$.
Thus $v=r+ta$ for some $t\in\ZZ$. We claim that $t\ge0$. Indeed, if $t\le-1$, then $v=r+ta\le r-a\le(a-1)-a=-1$,
contradicting $v\in\NN$.  Therefore $t\ge0$, whence $v\ge r$.  Consequently, $n=ua+vb\ge vb\ge rb$.
Conversely, suppose $n\ge rb$.  By~\cref{eq:residue-r}, the difference $n-rb$ is divisible by $a$.  Since this difference is also nonnegative, there exists $q\in\NN$ such that $n-rb=qa$.
Therefore $n=qa+rb\in\gen{a,b}=S$. This proves the reverse implication.
\end{proof}

We now derive a complete membership for $S_a=\gen{a,a+1}$.  This will be used repeatedly.

\begin{lemma}\label{lem:membership}
Let $a\ge2$, and let $n\in\NN$.  Write the Euclidean division of $n$ by $a$
as $n=qa+j$, $q\in\NN$, $0\le j\le a-1$.
Then $n\in\gen{a,a+1} \Longleftrightarrow j\le q$.
\end{lemma}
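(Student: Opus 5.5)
The plan is to derive the statement directly from Lemma~\ref{lem:membership2} with $b=a+1$, or alternatively by a short self-contained argument. Since $a+1\equiv 1\pmod a$, the unique residue $r\in\{0,\dots,a-1\}$ with $n\equiv r(a+1)\pmod a$ is just $r\equiv n\pmod a$. Writing $n=qa+j$ with $0\le j\le a-1$, this gives $r=j$. Lemma~\ref{lem:membership2} then says
\[
n\in\gen{a,a+1}\iff n\ge j(a+1)=ja+j .
\]
Substituting $n=qa+j$, this becomes $qa+j\ge ja+j$, i.e.\ $qa\ge ja$, i.e.\ $q\ge j$ because $a>0$. This is exactly the claimed criterion $j\le q$.

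As a sanity check, one can also verify the criterion directly without the earlier lemma. If $j\le q$, then
\[
n=(q-j)a+j(a+1)
\]
is a nonnegative combination of $a$ and $a+1$, so $n\in\gen{a,a+1}$. Conversely, suppose $n=ua+v(a+1)=(u+v)a+v$ with $u,v\in\NN$. Reducing modulo $a$ gives $v\equiv j\pmod a$, so $v\ge j$. Writing $v=j+ta$ with $t\ge0$, we get
\[
n=(u+v+t)a+j ,
\]
and uniqueness of Euclidean division forces $q=u+v+t\ge v\ge j$.

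No step here is a real obstacle. The only point needing care is the identification $r=j$ when applying Lemma~\ref{lem:membership2}, which relies on $a+1\equiv1\pmod a$ and on $j$ already lying in $\{0,\dots,a-1\}$.
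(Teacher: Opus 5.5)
Your proof is correct. Your main route differs from the paper's. The paper does not invoke Lemma~\ref{lem:membership2}; it argues directly. If $j\le q$, it writes $n=(q-j)a+j(a+1)$. Conversely, from $n=xa+y(a+1)=(x+y)a+y$ it performs the Euclidean division $y=ua+j'$, uses uniqueness to get $q=x+y+u$ and $j=j'$, and concludes $q\ge y\ge j$.

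Your ``sanity check'' is essentially this argument. You replace the Euclidean division of $y$ by the congruence $v\equiv j\pmod a$ and write $v=j+ta$. The step $t\ge0$ deserves its one-line justification: $t\le -1$ would force $v\le j-a<0$.

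Obtaining the lemma as the specialization $b=a+1$ of Lemma~\ref{lem:membership2} is shorter and explains the criterion conceptually. Since $a+1\equiv1\pmod a$, the threshold $rb$ for the residue class of $n$ becomes $j(a+1)$, and $n\ge j(a+1)$ is literally $q\ge j$. The paper's direct proof is instead self-contained. It also puts the explicit representation $n=(q-j)a+j(a+1)$ front and center, which is the same decomposition reused in the first step of the proof of Theorem~\ref{thm:exact-formula}.
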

\begin{proof}
Assume first that $j\le q$.  Then $q-j\in\NN$, and $n=qa+j=(q-j)a+j(a+1)$. Both coefficients $q-j$ and $j$ are nonnegative integers.  Hence $n\in\gen{a,a+1}$.
Conversely, assume that $n\in\gen{a,a+1}$.  Then there exist $x,y\in\NN$ such that $n=xa+y(a+1)=(x+y)a+y$. Write the Euclidean division of $y$ by $a$ as $y=ua+j'$, $u\in\NN$, $0\le j'\le a-1$.
We have $n=(x+y+u)a+j'$. Therefore, we have $q=x+y+u$, $j=j'$.
Since $x,y,u\ge0$ and $y=ua+j\ge j$, one has, more directly, $q=x+y+u\ge y\ge j$.
Thus $j\le q$, proving the converse implication.
\end{proof}

By Lemma~\ref{lem:membership}, we immediately obtain the following result.

\begin{proposition}\label{prop:gaps-consecutive}
Let $a\ge2$. Then
\begin{align*}
 \Gap(\gen{a,a+1})=\left\{qa+j:0\le q\le a-2,\quad q+1\le j\le a-1\right\}.
\end{align*}
\end{proposition}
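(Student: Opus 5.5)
The proposition follows directly from Lemma~\ref{lem:membership}, which characterizes membership through the Euclidean division $n=qa+j$. The plan is to translate the criterion ``$n\notin\gen{a,a+1}$ iff $j>q$'' into the stated index set, and then check the range of $q$.

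\textbf{Step 1.} Take $n\in\NN$ and write $n=qa+j$ with $q\in\NN$ and $0\le j\le a-1$. By Lemma~\ref{lem:membership}, $n$ is a gap exactly when $j\ge q+1$.

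\textbf{Step 2.} Pin down the range of $q$. Since $j\le a-1$, the condition $q+1\le j$ forces $q+1\le a-1$, i.e.\ $q\le a-2$. So every gap has the form $qa+j$ with $0\le q\le a-2$ and $q+1\le j\le a-1$.

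\textbf{Step 3.} Conversely, take any pair $(q,j)$ in this range. Then $0\le j\le a-1$, so $qa+j$ is precisely the Euclidean division of $n=qa+j$. Because $j>q$, Lemma~\ref{lem:membership} shows $n\notin\gen{a,a+1}$.

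\textbf{Step 4.} Uniqueness of the Euclidean division means distinct pairs give distinct integers, so the set is described without repetition. This is not needed for the set equality itself.

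There is no real obstacle here. The only point needing care is Step~3: one must confirm that the representation $qa+j$ in the set really is the Euclidean division, which holds because $0\le j\le a-1$, before applying the lemma.
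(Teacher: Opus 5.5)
Your proof is correct and takes the same approach as the paper, which presents the proposition as an immediate consequence of Lemma~\ref{lem:membership}. Your Steps 1--3 just make explicit the details the paper leaves implicit: the bound $q\le a-2$ coming from $q+1\le j\le a-1$, and the fact that $qa+j$ with $0\le j\le a-1$ is the Euclidean division of $n$.
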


For later use, we describe the gaps by reflecting the elements of the
semigroup across its Frobenius number.

\begin{proposition}\label{prop:reflection-param}
Let $F_a:=a^2-a-1$. The map
\begin{equation}\label{eq:reflection-map}
\Phi_a:\{(i,j)\in\NN^2:0\le j\le i\le a-2\}\longrightarrow\Gap(\gen{a,a+1}),\qquad\Phi_a(i,j)=F_a-ia-j,
\end{equation}
is a bijection. 
\end{proposition}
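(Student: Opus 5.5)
Proposition~\ref{prop:gaps-consecutive} already describes the gaps explicitly, so the plan is to show that $\Phi_a$ maps into that set and is invertible there. Write $F_a-ia-j$ in the form $qa+j'$ with $0\le j'\le a-1$. Since $F_a=a^2-a-1=(a-2)a+(a-1)$, we have
\[
F_a-ia-j=(a-2-i)a+(a-1-j).
\]
When $0\le j\le i\le a-2$, this gives $q:=a-2-i\in\{0,\ldots,a-2\}$ and $j':=a-1-j\in\{1,\ldots,a-1\}$. So this is exactly the Euclidean division of $\Phi_a(i,j)$ by $a$, and it is nonnegative.

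Next we check the gap condition $q+1\le j'$. This reads $a-1-i\le a-1-j$, i.e.\ $j\le i$, which holds by assumption. By Proposition~\ref{prop:gaps-consecutive} (or directly by Lemma~\ref{lem:membership}, since $j'>q$), $\Phi_a(i,j)$ is a gap, so $\Phi_a$ is well defined.

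For bijectivity, we define the inverse. Given a gap $qa+j'$ with $0\le q\le a-2$ and $q+1\le j'\le a-1$, set $i=a-2-q$ and $j=a-1-j'$. Then $0\le i\le a-2$ and $j\ge0$, and $j\le i$ is equivalent to $q+1\le j'$. The two maps are mutually inverse because Euclidean division is unique. Equivalently, injectivity follows from uniqueness of the quotient and remainder, and surjectivity from this explicit preimage.

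There is no real obstacle. The only point needing care is confirming that $a-1-j$ lies in $[0,a-1]$, so that the displayed expression really is the Euclidean division; this holds because $0\le j\le a-2$. As a consistency check, both sets have $\tfrac12 a(a-1)$ elements, which matches Sylvester's genus formula for $\gen{a,a+1}$.
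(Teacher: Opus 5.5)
Your proposal is correct and matches the paper's proof: you use the same rewriting $F_a-ia-j=(a-2-i)a+(a-1-j)$, check the gap condition $q+1\le j'$ against Proposition~\ref{prop:gaps-consecutive}, and write down the same explicit inverse $i=a-2-q$, $j=a-1-j'$. Your added remark that the two sets have the same cardinality $\tfrac12 a(a-1)$ is a useful sanity check, but the proof does not need it.
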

\begin{proof}
Let $(i,j)$ satisfy $0\le j\le i\le a-2$.  Then $F_a-ia-j=(a-2-i)a+(a-1-j)$.
Set $q=a-2-i$, $r=a-1-j$.
The inequalities $0\le i\le a-2$ imply $0\le q\le a-2$, while $0\le j\le i$ gives $r-q=1+i-j\ge1$.
Thus $q+1\le r$.  Also $r\le a-1$.  Proposition~\ref{prop:gaps-consecutive} therefore shows that $qa+r=F_a-ia-j$ is a gap.  Hence $\Phi_a$ is well-defined.

Conversely, let $n$ be a gap.  By Proposition~\ref{prop:gaps-consecutive}, it has an Euclidean expression $n=qa+r$, $0\le q\le a-2$, $q+1\le r\le a-1$. Define $i=a-2-q$, $j=a-1-r$. Then $i,j\in\NN$, $i\le a-2$, and $i-j=r-q-1\ge0$.
Thus $0\le j\le i\le a-2$, and a direct calculation gives $F_a-ia-j=qa+r=n$.
This construction is inverse to \cref{eq:reflection-map}; therefore $\Phi_a$ is bijective.
\end{proof}

\begin{definition}[Symmetric numerical semigroup, \cite{Rosales1996} \cite{Rosales2001}]
Let $S\neq\NN$ be a numerical semigroup with Frobenius number $g(S)$. The semigroup $S$ is called \emph{symmetric} if, for every integer $n$ with $0\le n\le g(S)$, exactly one of $n$ and $g(S)-n$ belongs to $S$.
Equivalently,
\[n\notin S\quad\Longleftrightarrow\quad g(S)-n\in S \qquad(0\le n\le g(S)).\]
\end{definition}

\begin{proposition}{\em (\cite[Corollary 4.7]{J.C.Rosales})}\label{prop:symmetry}
Let $a,b\in\ZZ_{>0}$ satisfy $\gcd(a,b)=1$, and put $S=\gen{a,b}$. Then $S$ is symmetric.
\end{proposition}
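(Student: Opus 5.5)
The statement to prove is Proposition~\ref{prop:symmetry}: for coprime $a,b$, $S=\gen{a,b}$ is symmetric. My plan is to derive it directly from Lemma~\ref{lem:membership2}, using Sylvester's formula $g(S)=ab-a-b$ recalled in the introduction. That formula also follows from the lemma, since the largest gap is $(a-1)b-a$.

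Fix $n$ with $0\le n\le g(S)$, and set $m=g(S)-n=ab-a-b-n$. Let $r\in\{0,\ldots,a-1\}$ be the residue attached to $n$ by \cref{eq:residue-r}, so $n\equiv rb\pmod a$. Then
\[m\equiv -b-rb=-(r+1)b\equiv (a-1-r)b\pmod a.\]
Since $a-1-r\in\{0,\ldots,a-1\}$, the residue attached to $m$ is $r'=a-1-r$.

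By Lemma~\ref{lem:membership2}, $m\in S$ if and only if $m\ge (a-1-r)b$. This condition reads
\[ab-a-b-n\ge ab-b-rb,\]
which is equivalent to $n\le rb-a$, that is, $n<rb$ (the two agree because $n\equiv rb\pmod a$). Again by Lemma~\ref{lem:membership2}, $n<rb$ holds exactly when $n\notin S$. Hence $m\in S\iff n\notin S$, so exactly one of $n$ and $g(S)-n$ lies in $S$. This is the definition of symmetry.

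The only delicate step is the congruence bookkeeping: we need $r'=a-1-r$ to lie in the correct range, and we need the step from $n\le rb-a$ to $n<rb$. The latter holds because $n$ and $rb$ differ by a multiple of $a$, so $n<rb$ forces $n\le rb-a$.
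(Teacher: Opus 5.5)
Your proof is correct, but it takes a different route from the paper, which gives no argument of its own: it quotes \cite[Corollary 4.7]{J.C.Rosales}. There the result is deduced from Sylvester's two formulas, $g(S)=ab-a-b$ and $\#\Gap(S)=\frac12(a-1)(b-1)=\frac{g(S)+1}{2}$. These are combined with the general criterion that a numerical semigroup is symmetric exactly when its genus equals $\frac{g(S)+1}{2}$. The criterion holds because $n$ and $g(S)-n$ are never both in $S$, so at least half of the $g(S)+1$ integers in $\{0,\ldots,g(S)\}$ are gaps, with equality exactly in the symmetric case.

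You instead work residue class by residue class with Lemma~\ref{lem:membership2}. The reflection $n\mapsto g(S)-n$ carries the class of $rb$ to the class of $(a-1-r)b$, and the two membership thresholds satisfy $rb+(a-1-r)b=(a-1)b=g(S)+a$. In Ap\'ery-set language, this says that $\mathrm{Ap}(S,a)=\{0,b,\ldots,(a-1)b\}$ is symmetric about its maximum.

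The counting route is structural and applies to any numerical semigroup once its genus is known, but it needs the genus formula as input. Your route is self-contained within the paper: it uses only a lemma the paper already proves, and it yields Sylvester's Frobenius formula along the way, so your few lines could replace the external citation.

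The two delicate points you flagged are handled correctly: the range of $a-1-r$, and $n\le rb-a\iff n<rb$ via the congruence. Also, the hypothesis $n\le g(S)$ is exactly what guarantees $m\in\NN$, so that Lemma~\ref{lem:membership2} may be applied to $m$.
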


The next proposition is the basic bridge between the original semigroup and its quotient.  It is valid for every symmetric numerical semigroup, not only for two-generated ones.

\begin{proposition}{\em (\cite[Corollary 4.4]{F.Strazzanti})}\label{prop:quotient-residue}
Let $S\neq\NN$ be a symmetric numerical semigroup with Frobenius number $F=g(S)$.  Let $p\in\ZZ_{>0}$, and assume $S/p\neq\NN$.  Define
\begin{align*}
\omega_p(S):=\min\{s\in S:s\equiv F\pmod p\}.
\end{align*}
Then $\omega_p(S)\le F$, and
\begin{align*}
g\!\left(\frac{S}{p}\right)=\frac{F-\omega_p(S)}{p}.
\end{align*}
\end{proposition}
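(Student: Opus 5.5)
The plan is to translate gaps of $\frac{S}{p}$ into elements of $S$ in the residue class of $F$ modulo $p$, using the symmetry of $S$. The key observation is that $n\in\NN$ is a gap of $\frac{S}{p}$ if and only if $pn\notin S$, directly from the definition $\frac{S}{p}=\{n\in\NN: pn\in S\}$. Since every integer larger than $F$ lies in $S$, any gap $n$ of $\frac{S}{p}$ satisfies $0\le pn\le F$. For integers in the range $[0,F]$, symmetry of $S$ gives $pn\notin S\Longleftrightarrow F-pn\in S$.

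First I will establish the correspondence. For $n\in\NN$ with $pn\le F$, set $s=F-pn$. Then $s\ge 0$ and $s\equiv F\pmod p$. By the previous paragraph, $n$ is a gap of $\frac{S}{p}$ if and only if $s\in S$. Conversely, suppose $s\in S$ satisfies $s\le F$ and $s\equiv F\pmod p$. Then $n=(F-s)/p$ is a nonnegative integer with $pn=F-s\in[0,F]$. Applying symmetry to $pn$ (note $F-pn=s\in S$) shows $pn\notin S$, so $n$ is a gap of $\frac{S}{p}$. This gives a bijection
\[
\Gap\!\left(\tfrac{S}{p}\right)\;\longleftrightarrow\;\{s\in S: s\le F,\ s\equiv F \pmod p\},
\]
via $n\mapsto F-pn$. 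The bijection is order-reversing.

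Next I will deduce both claims of the proposition. Since $\frac{S}{p}\neq\NN$, there is at least one gap, so the set on the right is nonempty. Hence some $s\in S$ with $s\equiv F\pmod p$ satisfies $s\le F$, and therefore $\omega_p(S)\le F$. Every element of $S$ congruent to $F$ modulo $p$ that is at most $F$ is at least $\omega_p(S)$, and $\omega_p(S)$ itself lies in the set. Because the bijection reverses order, the largest gap corresponds to the smallest such $s$, namely $\omega_p(S)$. This gives $g\!\left(\frac{S}{p}\right)=\frac{F-\omega_p(S)}{p}$.

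There is no real obstacle here. The only points needing care are the two range conditions. The first is that a gap $n$ of $\frac{S}{p}$ automatically satisfies $pn\le F$, which is needed before symmetry can be invoked. The second is that the minimum defining $\omega_p(S)$ is attained at a value at most $F$, so that it lies within the range where the symmetry equivalence applies.
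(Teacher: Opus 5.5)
Your proof is correct and complete. The map $n\mapsto F-pn$ is an order-reversing bijection from $\Gap\!\left(\frac{S}{p}\right)$ onto $\{s\in S: s\le F,\ s\equiv F \pmod p\}$, and it yields both claims. You also handle both range issues properly. Every gap $n$ of $\frac{S}{p}$ satisfies $0\le pn\le F$, so symmetry applies. The nonemptiness coming from $\frac{S}{p}\neq\NN$ forces the unrestricted minimum $\omega_p(S)$ into the restricted set.

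The paper gives no argument of its own here; it simply cites Strazzanti's Corollary 4.4. Your argument therefore serves as a self-contained proof of the cited result. It also immediately gives the sharper bound $\omega_p(S)\le F-p$ that the paper invokes in the proof of Corollary~\ref{cor:modular-minimization}. This holds because $\omega_p(S)\in S$, $F\notin S$, and $\omega_p(S)\equiv F\pmod p$.
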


Applying Proposition~\ref{prop:quotient-residue} to $S_a=\gen{a,a+1}$ and using Proposition~\ref{prop:reflection-param}, we
obtain the following concrete formulation.

\begin{corollary}\label{cor:modular-minimization}
Let $a>p\ge1$, and set $F_a=a^2-a-1$.  Define
\begin{equation}\label{eq:def-w-ap}
   w(a,p):=\min\{s\in\gen{a,a+1}:s\equiv F_a\pmod p\}.
\end{equation}
Then
\begin{equation}\label{eq:G-via-w}
   G(a,p)=\frac{F_a-w(a,p)}{p}.
\end{equation}
Equivalently,
\begin{equation}\label{eq:w-triangle}
 w(a,p)=\min\left\{ia+j:0\le j\le i\le a-2,\quad ia+j\equiv F_a\pmod p\right\}.
\end{equation}
\end{corollary}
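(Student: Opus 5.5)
The plan is to apply Proposition~\ref{prop:quotient-residue} directly to $S=S_a=\gen{a,a+1}$ and then rewrite the resulting minimum using the reflection parametrization of Proposition~\ref{prop:reflection-param}.

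\textbf{Step 1: hypotheses of Proposition~\ref{prop:quotient-residue}.} By Proposition~\ref{prop:symmetry}, $S_a$ is symmetric, and Sylvester's formula gives $g(S_a)=a(a+1)-a-(a+1)=a^2-a-1=F_a$. We also need $S_a/p\neq\NN$, which is equivalent to $p\notin S_a$. This holds because $1\le p<a$ and every nonzero element of $S_a$ is at least $a$. Proposition~\ref{prop:quotient-residue} then gives $\omega_p(S_a)=w(a,p)$ together with $w(a,p)\le F_a$, and hence \eqref{eq:G-via-w}.

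\textbf{Step 2: the triangle formula \eqref{eq:w-triangle}.} Since $w(a,p)\le F_a$, the minimum in \eqref{eq:def-w-ap} may be taken over $s\in S_a\cap[0,F_a]$. By symmetry, the map $n\mapsto F_a-n$ is a bijection from $\Gap(S_a)$ onto $S_a\cap[0,F_a]$. Proposition~\ref{prop:reflection-param} says the gaps are exactly $F_a-ia-j$ with $0\le j\le i\le a-2$. Consequently $S_a\cap[0,F_a]=\{ia+j:0\le j\le i\le a-2\}$.

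This can be checked directly without invoking symmetry. By Lemma~\ref{lem:membership}, the number $ia+j$ with $0\le j\le i\le a-2<a$ has quotient $i$ and remainder $j$, so it lies in $S_a$ since $j\le i$. It is also at most $(a-2)a+(a-2)<F_a$. Conversely, let $s\in S_a$ with $s\le F_a$, written as $s=qa+j$ with $0\le j\le a-1$ and $j\le q$. If $q\ge a-1$, then $s\ge (a-1)a>F_a$, which is impossible. So $q\le a-2$, and therefore $j\le q\le a-2$.

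Substituting this description, with the congruence condition $s\equiv F_a\pmod p$ unchanged, yields \eqref{eq:w-triangle}.

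\textbf{Main obstacle.} The only point needing care is that the minimum defining $w(a,p)$ does not escape above $F_a$. Without that, restricting to the finite triangle would be unjustified. This is exactly the bound $\omega_p(S)\le F$ provided by Proposition~\ref{prop:quotient-residue}, so once its hypothesis $p\notin S_a$ is verified, everything else is routine bookkeeping.
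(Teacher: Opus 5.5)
Your proposal is correct and follows essentially the same route as the paper. You apply Proposition~\ref{prop:quotient-residue} to the symmetric semigroup $\gen{a,a+1}$, use $w(a,p)\le F_a$ to restrict the minimum to $\gen{a,a+1}\cap[0,F_a]$, and identify that set with the triangle $\{ia+j:0\le j\le i\le a-2\}$ via the reflection parametrization (or directly via Lemma~\ref{lem:membership}), while also checking explicitly the hypotheses $g(\gen{a,a+1})=F_a$ and $p\notin\gen{a,a+1}$ that the paper leaves implicit.
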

\begin{proof}
Proposition~\ref{prop:symmetry} shows that $\gen{a,a+1}$ is symmetric, so Proposition~\ref{prop:quotient-residue}
applies and yields \cref{eq:G-via-w}.
It remains to justify \cref{eq:w-triangle}.  The proof of
Proposition~\ref{prop:reflection-param} shows that every element
$s\in\gen{a,a+1}$ satisfying $0\le s\le F_a$ has an Euclidean form $s=ia+j$, $0\le j\le i\le a-2$.
Proposition~\ref{prop:quotient-residue} gives
$w(a,p)\le F_a-p<F_a$, so the minimum in \cref{eq:def-w-ap} lies in this
range.  Therefore minimizing over the congruence class in the semigroup is
equivalent to the finite minimization in \cref{eq:w-triangle}.
\end{proof}

Before the end of this section, we collect some definitions and results that will be used later.

\begin{definition}\label{def:polynomial-description}
Let $D\subseteq\ZZ_{>0}^2$, and let $H:D\to\CC$ be a function. A \emph{polynomial relation} for $H$ on $D$ is a nonzero polynomial
$F\in\CC[X_1,X_2,X_3]$ such that $F(a,p,H(a,p))=0$ for every $(a,p)\in D$.
A \emph{finite polynomial description} of $H$ on $D$ is a finite collection $f_1,\ldots,f_m\in\CC[X_1,X_2]$ such that, for every $(a,p)\in D$, at least one of the equalities $H(a,p)=f_i(a,p)$ holds.
\end{definition}

A finite polynomial description always produces a polynomial relation, since
\[\prod_{i=1}^m\bigl(Y-f_i(X_1,X_2)\bigr)
\]
is a nonzero polynomial. 
The converse need not hold. Therefore, proving the nonexistence of every polynomial relation is a stronger conclusion than ruling out finitely many polynomial formulas.

\begin{definition}[Zariski density]\label{def:zariski}
A subset $E\subseteq\Aff^n_{\CC}$ is \emph{Zariski dense} if the only polynomial in $\CC[Z_1,\ldots,Z_n]$ that vanishes at every point of $E$ is the zero polynomial.  Equivalently, its Zariski closure is the whole affine
space $\Aff^n_{\CC}$.
\end{definition}

Thus the nonexistence of a polynomial relation for a function of two parameters is exactly the Zariski density of its graph in $\Aff^3_{\CC}$.

We also use the following classical form of Dirichlet's theorem on primes in arithmetic progressions~\cite{Davenport}.
\begin{theorem}[Dirichlet's theorem]\label{thm:Dirichlet}
Let $q\ge2$ and let $r$ be an integer with $\gcd(r,q)=1$. Then the arithmetic progression $r+q\ZZ$ contains
infinitely many prime numbers.
\end{theorem}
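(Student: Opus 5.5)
Since the paper only cites this classical statement from \cite{Davenport}, I would follow the standard analytic route via Dirichlet characters and $L$-functions. Let $\chi$ run over the $\varphi(q)$ Dirichlet characters modulo $q$, and put $L(s,\chi)=\sum_{n\ge1}\chi(n)n^{-s}$ for real $s>1$. The goal is to show that $\sum_{\ell\equiv r\ (q)}\ell^{-s}\to\infty$ as $s\to1^+$, with the sum over primes $\ell$. This forces infinitely many primes $\ell\equiv r\pmod q$.

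\textbf{Step 1: isolate the residue class.} Complete multiplicativity of $\chi$ gives the Euler product $L(s,\chi)=\prod_\ell(1-\chi(\ell)\ell^{-s})^{-1}$ for $s>1$. Taking logarithms,
\[\log L(s,\chi)=\sum_\ell \chi(\ell)\ell^{-s}+O(1),\]
uniformly for $s>1$, because the prime-power terms converge at $s=1$. Multiply by $\overline{\chi}(r)$ and sum over $\chi$. Since $\gcd(r,q)=1$, the orthogonality relation $\sum_\chi\overline{\chi}(r)\chi(n)=\varphi(q)\mathbf 1_{n\equiv r\,(q)}$ yields
\[\varphi(q)\sum_{\ell\equiv r\ (q)}\ell^{-s}=\sum_\chi\overline{\chi}(r)\log L(s,\chi)+O(1).\]

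\textbf{Step 2: the principal character.} For the principal character $\chi_0$, $L(s,\chi_0)=\zeta(s)\prod_{\ell\mid q}(1-\ell^{-s})$. Since $\zeta(s)\ge\int_1^\infty x^{-s}\,dx=1/(s-1)$, we get $\log L(s,\chi_0)\to+\infty$ as $s\to1^+$.

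\textbf{Step 3: the nonprincipal characters.} For $\chi\ne\chi_0$, the partial sums of $\chi(n)$ are bounded by $q$. By partial summation, $L(s,\chi)$ converges for $s>0$ and is continuous on that range. So it suffices to prove $L(1,\chi)\neq0$; then $\log L(s,\chi)$ stays bounded as $s\to1^+$ and the divergence in Step 2 wins.

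\textbf{Step 4: complex characters.} For non-real $\chi$ I would use the product $P(s)=\prod_\chi L(s,\chi)$. Its logarithm equals $\varphi(q)\sum_{\ell^k\equiv1\,(q)}k^{-1}\ell^{-ks}\ge0$, so $P(s)\ge1$ for $s>1$. If $L(1,\chi)=0$ for some non-real $\chi$, then also $L(1,\overline\chi)=0$. These two zeros, each of order at least one, outweigh the simple pole of $L(s,\chi_0)$, which would force $P(s)\to0$, a contradiction.

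\textbf{Step 5: real characters, the main obstacle.} For real $\chi\ne\chi_0$ the argument of Step 4 fails, since only one factor can vanish. Here I would consider $f(s)=\zeta(s)L(s,\chi)=\sum_n c_n n^{-s}$ with $c_n=\sum_{d\mid n}\chi(d)$. Multiplicativity gives $c_n\ge0$ for all $n$ and $c_{m^2}\ge1$.

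First I would try the elementary hyperbola method. Set $\Sigma(N)=\sum_{n\le N}c_n/\sqrt n$. On one side, $\Sigma(N)\ge\sum_{m\le\sqrt N}1/m\gg\log N$. On the other side, expand $\Sigma(N)$ over pairs $dm\le N$ by splitting at $\sqrt N$, and use $\sum_{m\le x}m^{-1/2}=2\sqrt x+c+O(x^{-1/2})$ together with the bounded partial sums of $\chi$. This gives $\Sigma(N)=2\sqrt N\,L(1,\chi)+O(1)$. If $L(1,\chi)=0$, then $\Sigma(N)=O(1)$, contradicting the logarithmic lower bound.

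Carrying out this hyperbola estimate carefully is the technical heart of the proof. A fallback is Landau's theorem on Dirichlet series with nonnegative coefficients: if $L(1,\chi)=0$, then $f$ would extend analytically to $\Re s>0$, yet $f(\tfrac12)\ge\sum_m m^{-1}=\infty$.

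Combining Steps 1–5, the right-hand side of the identity in Step 1 tends to $+\infty$ as $s\to1^+$, which proves the theorem.
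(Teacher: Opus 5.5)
Your proposal is correct and is essentially the classical analytic proof (characters, Euler products, divergence coming from $\chi_0$, and non-vanishing of $L(1,\chi)$ split into complex and real $\chi$) found in Davenport's book, which the paper cites rather than proves; your hyperbola estimate and your Landau fallback are both standard ways to finish the real-character case. One detail to tighten: in Step 4, calling $L(1,\chi)=0$ a zero ``of order at least one'' needs $L(s,\chi)=O(s-1)$ as $s\to1^+$, i.e.\ differentiability of $L(s,\chi)$ at $s=1$, which the same partial summation gives, whereas the mere continuity stated in Step 3 would not let $|L(s,\chi)|^2$ beat the pole $1/(s-1)$.
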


\section{Frobenius numbers of certain quotient semigroups}\label{Section-Three-FN}

We now identify a large collection of parameter regions on which the Frobenius number of $\gen{a,b}/p$ has a simple closed form.  This is the key semigroup theoretic construction in the proof of Theorem \ref{thm:main}.

\begin{proposition}\label{prop:explicit-branch}
Let $p>2$ be an odd integer, and let $k$ be an integer satisfying
\begin{equation}\label{eq:k-range}
  2\le k\le\frac{p+1}{2}.
\end{equation}
Let $a,b\in\ZZ_{>0}$ satisfy $p<a<b$, $\gcd(a,b)=1$,
\begin{equation}\label{eq:branch-congruences}
a\equiv1\pmod p, \qquad b\equiv p-k+1\pmod p,
\end{equation}
and
\begin{equation}\label{eq:branch-slope}
  p-k<\frac{b}{a}<p-k+1.
\end{equation}
Then
\begin{equation}\label{eq:explicit-branch}
  g\!\left(\frac{\gen{a,b}}{p}\right)
  =\frac{ab-(k-1)a-2b}{p}.
\end{equation}
\end{proposition}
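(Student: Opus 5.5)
The plan is to apply Proposition~\ref{prop:quotient-residue} to the symmetric semigroup $S=\gen{a,b}$. It is symmetric by Proposition~\ref{prop:symmetry}, and its Frobenius number is $F=ab-a-b$ by Sylvester's formula. The hypothesis $S/p\neq\NN$ holds because $p\notin S$, since $0<p<a<b$. Comparing the target formula \cref{eq:explicit-branch} with $g(S/p)=(F-\omega_p(S))/p$, I need to prove
\[
\omega_p(S)=(k-2)a+b .
\]

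\textbf{Residue bookkeeping.} By \cref{eq:branch-congruences} we have $a\equiv 1$ and $b\equiv 1-k \pmod p$. Hence
\[
F\equiv (1-k)-1-(1-k)\equiv -1 \pmod p .
\]
A general element $s=ua+vb$ of $S$, with $u,v\in\NN$, satisfies $s\equiv u+v(1-k)\pmod p$. So the condition $s\equiv F \pmod p$ becomes
\[
u\equiv (k-1)v-1 \pmod p .
\]
Taking $v=1$ and $u=k-2\ge 0$ (this uses $k\ge2$) gives the candidate $(k-2)a+b$, which lies in the correct class. It remains to show that every other element of $S$ in this class is at least as large.

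\textbf{Minimality, split according to $v$.}
\begin{itemize}
\item[(i)] If $v=0$, then $u\equiv -1 \pmod p$, so $u\ge p-1$ and $s\ge (p-1)a$. The upper slope bound in \cref{eq:branch-slope}, $b<(p-k+1)a$, gives $(k-2)a+b<(p-1)a$. So these elements are strictly larger than the candidate.
\item[(ii)] If $v=1$, then $u\equiv k-2 \pmod p$ with $0\le k-2<p$. Hence $u\ge k-2$, and the candidate is the least element of this type.
\item[(iii)] If $v\ge 2$, then $s\ge 2b$. The lower slope bound gives $b>(p-k)a\ge (k-2)a$, where the last inequality is equivalent to $k\le (p+2)/2$ and so follows from \cref{eq:k-range}. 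Therefore $2b>(k-2)a+b$.
\end{itemize}
Thus $\omega_p(S)=(k-2)a+b$. Substituting into Proposition~\ref{prop:quotient-residue} gives
\[
g\!\left(\frac{\gen{a,b}}{p}\right)=\frac{ab-a-b-(k-2)a-b}{p}=\frac{ab-(k-1)a-2b}{p},
\]
which is \cref{eq:explicit-branch}.

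\textbf{Main obstacle.} The only delicate point is the case analysis for minimality. The two slope inequalities in \cref{eq:branch-slope} are used in opposite directions: the upper bound rules out $v=0$, and the lower bound together with the range \cref{eq:k-range} rules out $v\ge2$. I will check carefully that both inequalities are strict, so that the minimum is attained only at $v=1$, $u=k-2$. Oddness of $p$ enters only to make $(p+1)/2$ an integer.
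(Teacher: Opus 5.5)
Your proposal is correct and matches the paper's proof step for step: the same candidate $\omega_p(S)=(k-2)a+b$, the same congruence $u\equiv (k-1)v-1 \pmod p$, and the same three-way split on $v$, with the upper slope bound excluding $v=0$ and the lower slope bound plus the range of $k$ excluding $v\ge 2$. The paper also checks $(k-2)a+b<F$ directly, but this already follows from the conclusion $\omega_p(S)\le F$ of Proposition~\ref{prop:quotient-residue}, so nothing is missing.
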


\begin{proof}
Set $S:=\gen{a,b}$ and $F:=g(S)=ab-a-b$.
Define  $c:=(k-2)a+b$. We shall prove that $\omega_p(S)=c$, where $\omega_p(S)$ is the residue minimum from Proposition~\ref{prop:quotient-residue}.

We first verify that $c$ belongs to the relevant congruence class. 
From \cref{eq:branch-congruences}, we have 
\begin{align*}
F=ab-a-b \equiv(1)(1-k)-1-(1-k)\equiv-1 \pmod p,
\end{align*}
and
\begin{align*}
c=(k-2)a+b \equiv(k-2)+(1-k)\equiv-1 \pmod p.
\end{align*}
Thus $c\equiv F\pmod p$.
Since $c=(k-2)a+b$ with $k-2\ge0$, one also has $c\in S$. Consequently, $\omega_p(S)\le c$.

We now prove the reverse inequality.  Suppose, for contradiction, that there
exists an element $s\in S$ such that
\begin{equation}\label{eq:s-smaller-c}
  s<c
  \qquad\text{and}\qquad
  s\equiv F\pmod p.
\end{equation}
Because $s\in S$, there exist $u,v\in\NN$ with $s=ua+vb$.
Using~\cref{eq:branch-congruences} and
$F\equiv-1\pmod p$, the congruence in~\cref{eq:s-smaller-c} becomes
\begin{equation}\label{eq:u-v-congruence}
  u-(k-1)v\equiv-1\pmod p.
\end{equation}
We separate the proof into three cases according to the value of $v$.

\smallskip
\noindent\emph{Case 1: $v=0$.} \cref{eq:u-v-congruence} gives $u\equiv-1\pmod p$.
Since $u\in\NN$, this implies $u\ge p-1$.  Hence
\begin{equation}\label{eq:case-v0-lower}
  s=ua\ge(p-1)a.
\end{equation}
On the other hand, the upper inequality in~\cref{eq:branch-slope} gives $b<(p-k+1)a$.
Therefore
\begin{align*}
c=(k-2)a+b<(k-2)a+(p-k+1)a=(p-1)a.
\end{align*}
Together with~\cref{eq:case-v0-lower}, this yields $s>c$, contradicting
$s<c$.

\smallskip
\noindent\emph{Case 2: $v=1$.} \cref{eq:u-v-congruence} becomes $u-(k-1)\equiv-1\pmod p$, so $u\equiv k-2\pmod p$.
Because $0\le k-2<p$ and $u\ge0$, it follows that $u\ge k-2$.  Hence $s=ua+b\ge(k-2)a+b=c$, again contradicting $s<c$.

\smallskip
\noindent\emph{Case 3: $v\ge2$.}
Since $u\ge0$, we obtain that $2b\le vb\le ua+vb=s<c=(k-2)a+b$.
Subtracting $b$ gives $b<(k-2)a$.
However, the lower inequality in~\cref{eq:branch-slope} gives $b>(p-k)a$.
The range~\cref{eq:k-range} implies $2k\le p+1$,
and hence $p-k\ge k-1>k-2$.
Therefore, we obtain $b>(p-k)a\ge(k-1)a>(k-2)a$, contradicting $b<(k-2)a$.

The three cases exhaust all possibilities for $v\in\NN$, and each leads to a
contradiction.  Therefore no $s\in S$ satisfying~\cref{eq:s-smaller-c}
exists.  Since $c\in S$ and $c\equiv F\pmod p$, we conclude that $\omega_p(S)=c$.

We must also check that $c<F$, so that the complementary gap used below is
positive.  We have
\begin{align*}
  F-c=(ab-a-b)-((k-2)a+b)=ab-(k-1)a-2b=(a-2)b-(k-1)a.
\end{align*}
Since $b>a$, we have $F-c>(a-2)a-(k-1)a=a(a-k-1)$.
Now~\cref{eq:k-range} and $p\ge3$ imply $k+1\le\frac{p+3}{2}\le p$.
Because $a>p$, it follows that $a-k-1>0$.  Hence $F-c>a(a-k-1)>0$, so indeed $c<F$.

Finally, $p<a<b$ implies $S/p\neq\NN$.  We may therefore apply
Proposition~\ref{prop:quotient-residue}.  Using
$\omega_p(S)=c$, we obtain
\begin{align*}
g(S/p)=\frac{F-c}{p}=\frac{ab-(k-1)a-2b}{p}.
\end{align*}
The numerator is divisible by $p$ because $F\equiv c\pmod p$. This proves~\cref{eq:explicit-branch}.
\end{proof}

\begin{example}\label{ex:branch}
Take $p=5$, $k=3$, $a=11$, $b=28$.
Then $11\equiv1\pmod5$, $28\equiv3=5-3+1\pmod5$, and $5-3=2<\frac{28}{11}<3=5-3+1$.
Proposition~\ref{prop:explicit-branch} gives
\[g\!\left(\frac{\gen{11,28}}{5}\right)=\frac{11\cdot28-2\cdot11-2\cdot28}{5}=46.
\]
Here $g(\gen{11,28})=269$, while the least element of
$\gen{11,28}$ congruent to $269$ modulo $5$ is $\omega_5=(k-2)a+b=11+28=39$.
Thus
\[g\!\left(\frac{\gen{11,28}}{5}\right)=\frac{269-39}{5}=46,
\]
in agreement with the residue-minimum formula.
The above result was also verified using the \emph{numericalsgps} package in GAP \cite{M.Delgado}.
\end{example}

\section{Polynomial nonexistence theorem for $\langle a,b\rangle/p$}\label{Section-Four-TheoI}

\subsection{Arithmetic approximation}

The explicit formula in Proposition~\ref{prop:explicit-branch} is useful only
if its hypotheses are met by sufficiently many coprime pairs $(a,b)$.  The
next lemma supplies such pairs with any prescribed limiting slope in the
open interval appearing in~\cref{eq:branch-slope}.

\begin{lemma}\label{lem:approximation}
Let $p>2$ be a prime, let $2\le k\le\frac{p+1}{2}$, and let $\alpha\in(p-k,p-k+1)$.
Then there exist sequences of positive integers $(a_n)_{n\ge1}$ and
$(b_n)_{n\ge1}$ such that, for every $n$,
\begin{enumerate}
  \item $a_n$ is prime and $a_n\equiv1\pmod p$;
  \item $b_n\equiv p-k+1\pmod p$;
  \item $p<a_n<b_n$, $\gcd(a_n,b_n)=1$, and $p\nmid b_n$;
  \item $p-k<\frac{b_n}{a_n}<p-k+1$;
  \item $\lim_{n\to\infty}\frac{b_n}{a_n}=\alpha$.
\end{enumerate}
In particular, $(a_n,b_n,p)\in\cA$ for every $n$.
\end{lemma}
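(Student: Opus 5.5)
We construct the sequences explicitly. Dirichlet's theorem (Theorem~\ref{thm:Dirichlet}) applies to the progression $1+p\ZZ$ because $\gcd(1,p)=1$. It gives an infinite increasing sequence of primes $a_1<a_2<\cdots$ with $a_n\equiv1\pmod p$ and $a_n>p$. Passing to a subsequence, we may also assume $a_n$ is as large as we like compared with fixed quantities depending only on $p$ and $\alpha$. For each $n$ we then look for $b_n$ in the residue class $p-k+1$ modulo $p$ lying close to $\alpha a_n$.

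\textbf{Choice of $b_n$.} Let $b_n$ be the least integer $b\ge \alpha a_n$ with $b\equiv p-k+1\pmod p$, so that $\alpha a_n\le b_n<\alpha a_n+p$. Then
\[
\alpha\le \frac{b_n}{a_n}<\alpha+\frac{p}{a_n}.
\]
Since $a_n\to\infty$, this gives condition (5). Since $\alpha$ lies strictly inside $(p-k,p-k+1)$, it also gives (4) once $a_n>p/(p-k+1-\alpha)$, which we arrange by discarding finitely many terms. Condition (2) holds by construction.

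\textbf{Arithmetic conditions (3).} Because $\alpha>p-k\ge k-1\ge1$, we get $b_n>a_n>p$. Next, $p\nmid b_n$ since $b_n\equiv p-k+1\pmod p$ and $2\le p-k+1\le p-1$, which uses $2\le k\le (p+1)/2<p$. Coprimality is the only point needing care. Since $a_n$ is prime, $\gcd(a_n,b_n)\ne1$ would force $a_n\mid b_n$. If that happens, replace $b_n$ by $b_n+p$: it stays in the same residue class modulo $p$ and cannot also be divisible by $a_n$, because $a_n\nmid p$ (as $a_n>p$). The shifted value still satisfies $\alpha a_n\le b_n<\alpha a_n+2p$, so conditions (4) and (5) survive after enlarging the threshold on $a_n$ to $2p/(p-k+1-\alpha)$.

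\textbf{Conclusion.} With $p$ prime, $a_n$ prime, $2<p<a_n<b_n$, $\gcd(a_n,b_n)=1$ and $p\nmid b_n$, we have $(a_n,b_n,p)\in\cA$. The only nontrivial input is the existence of infinitely many primes $a\equiv1\pmod p$, supplied by Dirichlet's theorem. Ensuring $\gcd(a_n,b_n)=1$ is the one step needing an adjustment, and primality of $a_n$ reduces it to the single shift by $p$ described above.
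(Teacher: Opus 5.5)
Your proof is correct and follows essentially the same route as the paper: Dirichlet's theorem supplies primes $a_n\equiv1\pmod p$, and $b_n$ is then chosen in the class $p-k+1$ modulo $p$ within distance $O(p)$ of $\alpha a_n$ (the paper rounds to the nearest such value instead, which gives $|b_n-\alpha a_n|\le p/2$). The one difference is that your shift $b_n\mapsto b_n+p$ is never actually needed: once condition (4) holds, $b_n/a_n$ lies strictly between the consecutive integers $p-k$ and $p-k+1$, so $a_n\nmid b_n$ automatically, and this is exactly how the paper obtains $\gcd(a_n,b_n)=1$.
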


\begin{proof}
Since $\gcd(1,p)=1$, Dirichlet's theorem \ref{thm:Dirichlet} gives infinitely many primes
congruent to $1$ modulo $p$.  Enumerate an increasing sequence of such primes
as $a_1<a_2<a_3<\cdots$, $a_n\equiv1\pmod p$. 
Then $a_n\to\infty$ as $n\to \infty$.

Set $r:=p-k+1$.
The range on $k$ gives $1\le r\le p-1$,
so in particular $r\not\equiv0\pmod p$.  For each $n$, choose an integer
$m_n$ nearest to the real number $\frac{\alpha a_n-r}{p}$.
Equivalently, choose $m_n\in\ZZ$ so that
\[
  \left|m_n-\frac{\alpha a_n-r}{p}\right|\le\frac12.
\]
Define $b_n:=r+pm_n$. Then $b_n\equiv r=p-k+1\pmod p$,
and multiplication of the preceding approximation inequality by $p$ gives $|b_n-\alpha a_n|\le\frac{p}{2}$.
Therefore, we obtain
\begin{equation}\label{eq:slope-error}
  \left|\frac{b_n}{a_n}-\alpha\right| \le\frac{p}{2a_n}.
\end{equation}
Because $a_n\to\infty$ as $n\to \infty$, the right-hand side tends to zero.  Therefore
\[ \frac{b_n}{a_n}\longrightarrow\alpha \quad \text{as}\quad  n\to \infty.
\]

We now verify the remaining conditions after discarding finitely many initial
terms. Since $\alpha\in(p-k,p-k+1)$, the distance from $\alpha$ to either endpoint is positive.  Set
\[\delta:=\frac12\min\{\alpha-(p-k),\,(p-k+1)-\alpha\}>0. \]
For all sufficiently large $n$, inequality~\cref{eq:slope-error} gives $\left|\frac{b_n}{a_n}-\alpha\right|<\delta$.
It follows that
\[
  p-k<\frac{b_n}{a_n}<p-k+1.
\]
Moreover, the range on $k$ implies $p-k\ge1$, and the inequality is strict;
thus $\alpha>1$.  Hence, for all sufficiently large $n$, $\frac{b_n}{a_n}>1$,
so $b_n>a_n$.  Since $a_n\to\infty$, we may also assume $a_n>p$.  In
particular, $b_n$ is positive.

Next, $b_n\equiv r\not\equiv0\pmod p$, so $p\nmid b_n$ for every $n$.
It remains to prove $\gcd(a_n,b_n)=1$ for all sufficiently large $n$.
Because $a_n$ is prime, a nontrivial common divisor of $a_n$ and $b_n$ would force $a_n\mid b_n$, and hence $b_n/a_n$ would be an integer.  However, the interval $(p-k,p-k+1)$ lies strictly between two consecutive integers and therefore contains no integer.  Once $b_n/a_n$ lies in this interval, it cannot be an integer.  Hence $a_n\nmid b_n$, and therefore $\gcd(a_n,b_n)=1$.

Discarding finitely many initial terms and relabeling the remaining sequence,
we obtain all five asserted properties for every $n$.
\end{proof}

The next elementary lemma is the core of the argument. It says that a polynomial cannot have zeros escaping to infinity along every
slope in a nonempty interval unless it is the zero polynomial.

\begin{lemma}\label{lem:asymptotic-vanishing}
Let $I\subset\mathbb R$ be a nonempty open interval, and let
$G\in\CC[X_1,X_2]$.  Assume that for every $\alpha\in I$ there exists a
sequence $(x_n,y_n)_{n\ge1}$ of real pairs such that
\begin{equation}\label{eq:asymptotic-hypotheses}
  x_n\longrightarrow+\infty,
  \quad
  \frac{y_n}{x_n}\longrightarrow\alpha,\quad \text{as}\quad n\longrightarrow\infty;
  \qquad
  G(x_n,y_n)=0
  \quad\text{for all }n.
\end{equation}
Then $G$ is the zero polynomial.
\end{lemma}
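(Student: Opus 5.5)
Suppose $G\neq 0$; we derive a contradiction by looking at the top-degree homogeneous part of $G$. Write $d=\deg G\ge 0$ and $G=G_d+G_{<d}$, where $G_d(X_1,X_2)=\sum_{i+j=d}c_{ij}X_1^iX_2^j$ is the nonzero homogeneous component of degree $d$ and $G_{<d}$ has degree at most $d-1$. If $d=0$, then $G$ is a nonzero constant and cannot vanish at any point, which contradicts the hypothesis. So we may take $d\ge1$.

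\textbf{Step 1 (limit along a slope).} Fix $\alpha\in I$ and the corresponding sequence $(x_n,y_n)$. Dividing $G(x_n,y_n)=0$ by $x_n^d$ (allowed, since $x_n>0$ for large $n$) gives
\[
0=G_d\!\left(1,\tfrac{y_n}{x_n}\right)+x_n^{-d}\,G_{<d}(x_n,y_n).
\]
Each monomial $x_n^iy_n^j$ with $i+j\le d-1$ satisfies $x_n^iy_n^j/x_n^d=(y_n/x_n)^j\,x_n^{i+j-d}$. Since $y_n/x_n$ is bounded (it converges) and $x_n^{i+j-d}\to0$, the error term tends to $0$. By continuity of $G_d(1,\cdot)$ we conclude $G_d(1,\alpha)=0$.

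\textbf{Step 2 (identity theorem for one-variable polynomials).} The univariate polynomial $h(t):=G_d(1,t)=\sum_{i+j=d}c_{ij}t^j$ therefore vanishes at every point of the infinite set $I$, so $h\equiv0$. This means $c_{ij}=0$ for all $i+j=d$, because each exponent $j$ appears exactly once. Hence $G_d=0$, contradicting the choice of $d$. Therefore $G=0$.

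The only step needing care is Step 1, namely controlling the lower-order terms. This works because the slope converges, so $|y_n|\le C x_n$ eventually, and every lower-degree monomial is then $O(x_n^{d-1})$. No other obstacle is expected. Complex coefficients cause no problem, since all limits are taken in $\CC$.
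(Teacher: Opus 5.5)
Your proposal is correct and follows essentially the same route as the paper: take the nonzero top-degree homogeneous component $G_D$, divide $G(x_n,y_n)=0$ by $x_n^D$, let the lower-order terms vanish because $y_n/x_n$ stays bounded, conclude $G_D(1,\alpha)=0$ on all of $I$, and hence $G_D(1,T)\equiv 0$, which forces $G_D=0$. The only difference is that you treat $d=0$ separately, which the paper's argument already covers, since the sum of lower-order terms is then empty.
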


\begin{proof}
Suppose, for contradiction, that $G\neq0$.  Let $D$ be the total degree of
$G$, and write $G$ as the sum of its homogeneous components: $G=G_0+G_1+\cdots+G_D$,
where each $G_j$ is homogeneous of total degree $j$ and $G_D\neq0$.

Fix $\alpha\in I$, and choose a sequence satisfying
\cref{eq:asymptotic-hypotheses}.  Because $x_n\to+\infty$, one has
$x_n\neq0$ for all sufficiently large $n$.  For such $n$, homogeneity gives
\[G_j(x_n,y_n) =x_n^jG_j\!\left(1,\frac{y_n}{x_n}\right).
\]
Therefore
\begin{align*}
 0 =\frac{G(x_n,y_n)}{x_n^D}=G_D\!\left(1,\frac{y_n}{x_n}\right)+\sum_{j=0}^{D-1}x_n^{j-D}G_j\!\left(1,\frac{y_n}{x_n}\right).
\end{align*}
The ratios $y_n/x_n$ converge to $\alpha$, and hence remain bounded.  Each
polynomial function $G_j(1,t)$ is therefore bounded along this sequence.
Since $j-D<0$ for $j<D$, every term in the sum tends to zero.  Passing to the
limit yields $G_D(1,\alpha)=0$.
Because $\alpha\in I$ was arbitrary, the one-variable polynomial $T\longmapsto G_D(1,T)$ vanishes at every point of the nonempty open interval $I$.  A nonzero one-variable complex polynomial has only finitely many zeros, so $G_D(1,T)\equiv0$.
Since $G_D$ is homogeneous of degree $D$, it can be written uniquely as
\[G_D(X_1,X_2) =\sum_{j=0}^{D}c_jX_1^{D-j}X_2^j.
\]
Then $G_D(1,T)=\sum_{j=0}^{D}c_jT^j$.
The identity $G_D(1,T)\equiv0$ forces every coefficient $c_j$ to be zero,
contradicting $G_D\neq0$.  Therefore $G=0$.
\end{proof}

\subsection{Proof of the polynomial nonexistence theorem I}

For completeness, we record the elementary root bound that will be used at the final step.
The following lemma is well known.

\begin{lemma}\label{lem:root-bound}
Let $K$ be a field and let $0\neq H\in K[Y]$ have degree $d$.  Then $H$ has at most $d$ distinct roots in $K$.
\end{lemma}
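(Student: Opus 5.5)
We prove this by induction on the degree $d$, using the factor theorem.

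For $d=0$, $H$ is a nonzero constant. It therefore has no roots, and the claim holds trivially.

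Now let $d\ge1$ and assume the claim for all nonzero polynomials of degree less than $d$. If $H$ has no root in $K$, we are done. Otherwise, pick a root $c\in K$. Polynomial division in $K[Y]$ is possible because $K$ is a field, so the leading coefficient of $Y-c$ is invertible. It gives
\[
H(Y)=(Y-c)Q(Y)+R,
\]
with $R\in K$ a constant. Evaluating at $Y=c$ yields $R=H(c)=0$, so $H=(Y-c)Q$. Comparing leading terms, $Q$ is nonzero of degree $d-1$.

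Next, let $c'\neq c$ be any other root of $H$. Then $0=H(c')=(c'-c)Q(c')$. Since $c'-c\neq0$ and $K$ has no zero divisors, it follows that $Q(c')=0$. Hence every root of $H$ other than $c$ is a root of $Q$. By the induction hypothesis, $Q$ has at most $d-1$ distinct roots, so $H$ has at most $1+(d-1)=d$ distinct roots.

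No step here is a genuine obstacle. The only points needing care are the two uses of the field hypothesis: division by the monic polynomial $Y-c$, and the absence of zero divisors when cancelling the factor $c'-c$.
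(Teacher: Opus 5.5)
Your proof is correct. It is the standard factor-theorem induction. The paper gives no proof of its own; it simply records the lemma as well known before using it, so there is no argument in the paper to compare yours against.

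One small refinement: division by the monic polynomial $Y-c$ works over any commutative ring, so the field hypothesis is not needed for that step. The only property of $K$ you actually use is that it has no zero divisors, so the same argument proves the bound over any integral domain.
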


We next record a simple specialization fact.

\begin{lemma}\label{lem:specialization}
Let $F\in\CC[X_1,X_2,X_3,Y]$ be nonzero.  Then $F(X_1,X_2,q,Y)\in\CC[X_1,X_2,Y]$ is nonzero for all but finitely many $q\in\CC$.
\end{lemma}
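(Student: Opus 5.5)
The plan is to view $F$ as a polynomial in the single variable $X_3$ with coefficients in the ring $R:=\CC[X_1,X_2,Y]$, and then read off the exceptional set from those coefficients. Since $F\neq0$, we can write
\[
F(X_1,X_2,X_3,Y)=\sum_{j=0}^{m}C_j(X_1,X_2,Y)\,X_3^{j},\qquad C_j\in R,
\]
and there is at least one index $j_0$ with $C_{j_0}\neq0$.

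Next, we expand $C_{j_0}$ in monomials $X_1^{\alpha}X_2^{\beta}Y^{\gamma}$ and choose one monomial whose coefficient $c\in\CC$ is nonzero. Collecting all terms of $F$ that contain this particular monomial $X_1^{\alpha}X_2^{\beta}Y^{\gamma}$ produces a one-variable polynomial
\[
h(X_3)=\sum_{j}c_{\alpha,\beta,\gamma,j}\,X_3^{j}\in\CC[X_3].
\]
This $h$ is nonzero, because its coefficient of $X_3^{j_0}$ equals $c\neq0$.

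We now specialize $X_3=q$. In $F(X_1,X_2,q,Y)$, the coefficient of the monomial $X_1^{\alpha}X_2^{\beta}Y^{\gamma}$ is exactly $h(q)$. Consequently, whenever $h(q)\neq0$, the specialization $F(X_1,X_2,q,Y)$ is nonzero. By Lemma~\ref{lem:root-bound}, applied with $K=\CC$, the nonzero polynomial $h$ has at most $\deg h$ roots. Hence $F(X_1,X_2,q,Y)$ can vanish identically for at most $\deg h$ values of $q\in\CC$, and these are all the exceptional values.

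I do not expect a real obstacle. The only point needing care is the bookkeeping: the coefficient of a fixed monomial in the specialization must equal the specialization of the corresponding one-variable coefficient polynomial. This holds because substituting $X_3=q$ commutes with grouping the terms by monomials in $X_1,X_2,Y$.
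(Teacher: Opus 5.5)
Your proof is correct and takes essentially the same approach as the paper: your one-variable polynomial $h(X_3)$ is exactly the paper's coefficient $c_{\alpha,\beta,\gamma}(X_3)$ of the monomial $X_1^{\alpha}X_2^{\beta}Y^{\gamma}$ when $F$ is viewed in $\CC[X_3][X_1,X_2,Y]$. Your first step of expanding in powers of $X_3$ is just a detour to locate that nonzero coefficient. The rest of the argument, that specialization is zero only at roots of this nonzero polynomial, matches the paper's.
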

\begin{proof}
Regard $F$ as a polynomial in $X_1,X_2,Y$ whose coefficients are polynomials
in $X_3$.  Thus
\[F(X_1,X_2,X_3,Y)=\sum_{(i,j,\ell)\in E}c_{i,j,\ell}(X_3)X_1^iX_2^jY^\ell,
\]
where $E$ is finite and each
$c_{i,j,\ell}(X_3)\in\CC[X_3]$.  Since $F\neq0$, at least one coefficient, say $c_{i_0,j_0,\ell_0}(X_3)$, is a nonzero polynomial.
If the specialization $F(X_1,X_2,q,Y)$ is the zero polynomial, then every coefficient after specialization must vanish.  In particular, $c_{i_0,j_0,\ell_0}(q)=0$.
A nonzero one variable polynomial has only finitely many roots.  Hence only finitely many values of $q$ can make the specialization identically zero.
\end{proof}

\begin{proof}[Proof of Theorem~\ref{thm:main}]
Assume, for contradiction, that a nonzero polynomial $F\in\CC[X_1,X_2,X_3,Y]$ satisfies~\cref{eq:main-relation} for every $(a,b,p)\in\cA$.  Let $d:=\deg_Y F$. By Lemma~\ref{lem:specialization}, the specialization
$F(X_1,X_2,q,Y)$ is nonzero for all but finitely many complex numbers $q$.
There are infinitely many primes, so we may choose an odd prime $q$ such that
$H_q(X_1,X_2,Y) :=F(X_1,X_2,q,Y)\neq 0$.
and $\frac{q-1}{2}>d$.

For each integer
\begin{equation}\label{eq:k-values-main}
  k\in\left\{2,3,\ldots,\frac{q+1}{2}\right\},
\end{equation}
define
\begin{align*}
\Phi_{q,k}(X_1,X_2):=\frac{X_1X_2-(k-1)X_1-2X_2}{q}\in\CC[X_1,X_2].
\end{align*}
We shall prove that
\begin{equation}\label{eq:H-vanishes-branch}
  H_q\bigl(X_1,X_2,\Phi_{q,k}(X_1,X_2)\bigr)\equiv0
\end{equation}
for every $k$ in~\cref{eq:k-values-main}.

Fix such a $k$, and set $G_{q,k}(X_1,X_2):=H_q\bigl(X_1,X_2,\Phi_{q,k}(X_1,X_2)\bigr)$.
Let $I_{q,k}:=(q-k,q-k+1)$.
Fix an arbitrary $\alpha\in I_{q,k}$.  By Lemma~\ref{lem:approximation}, there exist sequences $(a_n)$ and $(b_n)$ such
that
\[(a_n,b_n,q)\in\cA,\qquad\frac{b_n}{a_n}\longrightarrow\alpha\quad \text{as}\quad n\longrightarrow \infty,
\]
and all the hypotheses of Proposition~\ref{prop:explicit-branch} hold with
$p=q$.  Therefore
\begin{align*}
  g\!\left(\frac{\gen{a_n,b_n}}{q}\right) =\Phi_{q,k}(a_n,b_n)
\end{align*}
for every $n$.

Since $(a_n,b_n,q)\in\cA$, the assumed polynomial relation gives
\begin{align*}
0&=F\left(a_n,b_n,q,g\!\left(\frac{\gen{a_n,b_n}}{q}\right)\right)
=H_q\left(a_n,b_n,g\!\left(\frac{\gen{a_n,b_n}}{q}\right)\right)
\\& =H_q\bigl(a_n,b_n,\Phi_{q,k}(a_n,b_n)\bigr)
=G_{q,k}(a_n,b_n).
\end{align*}
Also $a_n\to\infty$ because the $a_n$ are an increasing sequence of primes,
and $b_n/a_n\to\alpha$.  Since this construction works for every
$\alpha\in I_{q,k}$, Lemma~\ref{lem:asymptotic-vanishing} implies $G_{q,k}\equiv0$.
This proves~\cref{eq:H-vanishes-branch}.

Now let $K:=\CC(X_1,X_2)$ be the field of rational functions in $X_1$ and $X_2$.  By
\cref{eq:H-vanishes-branch}, each $\Phi_{q,k}$ is a root of the nonzero
polynomial $H_q\in K[Y]$.  These roots are pairwise distinct.  Indeed, if
$k\neq\ell$, then
\begin{align*}
\Phi_{q,k}-\Phi_{q,\ell}=\frac{-(k-1)X_1+(\ell-1)X_1}{q}=\frac{\ell-k}{q}X_1,
\end{align*}
which is a nonzero element of $K$.

The set~\cref{eq:k-values-main} contains exactly $\frac{q-1}{2}$ integers.  Hence $H_q$, viewed as a nonzero polynomial in $K[Y]$, has at least $(q-1)/2$ distinct roots.  By Lemma~\ref{lem:root-bound},
\begin{equation}\label{eq:degree-lower}
  \deg_Y H_q\ge\frac{q-1}{2}.
\end{equation}
On the other hand, specialization in $X_3$ cannot increase the degree in
$Y$, so
\begin{equation}\label{eq:degree-upper}
  \deg_Y H_q\le\deg_Y F=d.
\end{equation}
Combining~\cref{eq:degree-lower} and~\cref{eq:degree-upper} gives $d\ge\frac{q-1}{2}$, contradicting~$\frac{q-1}{2}>d$.  This contradiction proves that no such nonzero polynomial $F$ exists.
\end{proof}

For each fixed $q$ and $k$, identity~\cref{eq:H-vanishes-branch} says that $Y-\Phi_{q,k}(X_1,X_2)$ divides $H_q$ in
$\CC[X_1,X_2,Y]$, because division by this monic linear polynomial leaves the remainder $H_q(X_1,X_2,\Phi_{q,k})$.  Equivalently, $qY-X_1X_2+(k-1)X_1+2X_2$ is a linear factor after multiplication by the nonzero scalar $q$.
Over $\CC(X_1,X_2)[Y]$, the factors for distinct $k$ are distinct, and their product divides $H_q$. 

\begin{remark}
For a fixed odd prime $p$, Proposition~\ref{prop:explicit-branch} produces the
formulas
\[\frac{ab-(k-1)a-2b}{p},\qquad2\le k\le\frac{p+1}{2}.
\]
There are $(p-1)/2$ such expressions.  Lemma~\ref{lem:approximation} shows
that each expression occurs on infinitely many coprime pairs $(a,b)$.  Consequently, a polynomial relation valid on
all these pairs must admit each expression as a distinct root in the
$Y$-variable.  Since $(p-1)/2$ is unbounded, no relation of fixed $Y$-degree
can exist.
\end{remark}

\begin{proof}[Proof of Corollary~\ref{cor:finite-poly-intro}]
Suppose that finitely many polynomials $f_1,\ldots,f_m\in\CC[X_1,X_2,X_3]$
have the stated property.  Define
\begin{equation}\label{eq:product-polynomial}
P(X_1,X_2,X_3,Y):=\prod_{i=1}^{m}\bigl(Y-f_i(X_1,X_2,X_3)\bigr).
\end{equation}
Every factor in~\cref{eq:product-polynomial} is monic of degree one in $Y$,
so $P$ is a nonzero polynomial, indeed a monic polynomial of degree $m$ in
$Y$.
Fix $(a,b,p)\in\cA$.  By hypothesis, there is an index $i$ such that $f_i(a,b,p) =g\!\left(\frac{\gen{a,b}}{p}\right)$.
At $Y=g\!\left(\frac{\gen{a,b}}{p}\right)$, the $i$th factor of~\cref{eq:product-polynomial} vanishes.  Therefore
\[ P\left(a,b,p,g\!\left(\frac{\gen{a,b}}{p}\right)\right)=0\]
for every $(a,b,p)\in\cA$.  This contradicts Theorem~\ref{thm:main}.

Finally, suppose a finite family worked for all relatively prime positive integers $a<b$ and all $p<a$.  It would then work on the subset $\cA$, and the same contradiction would follow.
\end{proof}

\begin{corollary}\label{cor:zariski}
The set
\[\left\{\left(a,b,p,g\!\left(\frac{\gen{a,b}}{p}\right)\right):(a,b,p)\in\cA\right\}
\]
is Zariski dense in affine four-space $\mathbb A^4_{\CC}$.
\end{corollary}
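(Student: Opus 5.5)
This is essentially a restatement of Theorem~\ref{thm:main} in the language of Definition~\ref{def:zariski}, so the plan is to unwind the definition and apply the theorem. Let $E\subseteq\Aff^4_{\CC}$ be the set in question. By Definition~\ref{def:zariski}, $E$ is Zariski dense exactly when every polynomial in $\CC[Z_1,Z_2,Z_3,Z_4]$ vanishing on all of $E$ is the zero polynomial. So we take an arbitrary $F\in\CC[Z_1,Z_2,Z_3,Z_4]$ with $F\equiv 0$ on $E$ and show that $F=0$.

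The key step is to rename the variables as $Z_1=X_1$, $Z_2=X_2$, $Z_3=X_3$, $Z_4=Y$. Under this identification, the statement that $F$ vanishes on $E$ says precisely that
\[
F\Bigl(a,b,p,g\bigl(\tfrac{\gen{a,b}}{p}\bigr)\Bigr)=0 \quad\text{for every } (a,b,p)\in\cA,
\]
which is relation~\cref{eq:main-relation}. Theorem~\ref{thm:main} says no nonzero polynomial satisfies this relation, so $F=0$. It follows that the only polynomial vanishing on $E$ is the zero polynomial, i.e.\ $E$ is Zariski dense.

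There is no genuine obstacle; all the work was done in the proof of Theorem~\ref{thm:main}. We also note the equivalent formulation: the Zariski closure $\overline{E}$ is the zero locus of the ideal of polynomials vanishing on $E$. That ideal is $\{0\}$ by the argument above, so $\overline{E}=\Aff^4_{\CC}$.
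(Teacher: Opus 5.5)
Your proposal is correct and matches the paper's proof: the corollary is Theorem~\ref{thm:main} restated through the definition of Zariski density, since the only polynomial vanishing on the set is the zero polynomial. Your closing observation that $\overline{E}=V(I(E))=V(\{0\})=\Aff^4_{\CC}$ is just an equivalent phrasing of the same fact.
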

\begin{proof}
A subset of affine space is Zariski dense precisely when the only polynomial vanishing on the entire subset is the zero polynomial.  The assertion is therefore an equivalent reformulation of Theorem~\ref{thm:main}.
\end{proof}

The same argument excludes finite collections of rational functions.

\begin{corollary}\label{cor:rational}
There do not exist finitely many rational functions $R_1,\ldots,R_m\in\CC(X_1,X_2,X_3)$ such that, for every $(a,b,p)\in\cA$, at least one $R_i$ is defined at $(a,b,p)$ and satisfies
\[R_i(a,b,p)=g\!\left(\frac{\gen{a,b}}{p}\right).
\]
\end{corollary}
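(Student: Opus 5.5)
The plan is to reduce Corollary~\ref{cor:rational} to Theorem~\ref{thm:main} by clearing denominators, in the same way Corollary~\ref{cor:finite-poly-intro} was reduced.

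Suppose such rational functions $R_1,\ldots,R_m$ exist. For each $i$, write $R_i=P_i/Q_i$ in lowest terms, with $P_i,Q_i\in\CC[X_1,X_2,X_3]$ and $Q_i\neq0$. Saying that $R_i$ is defined at $(a,b,p)$ means that $Q_i(a,b,p)\neq0$, and then $R_i(a,b,p)=P_i(a,b,p)/Q_i(a,b,p)$. Define
\[
P(X_1,X_2,X_3,Y):=\prod_{i=1}^{m}\bigl(Q_i(X_1,X_2,X_3)\,Y-P_i(X_1,X_2,X_3)\bigr).
\]
Each factor has $Y$-coefficient $Q_i\neq0$, so each factor is a nonzero element of the integral domain $\CC[X_1,X_2,X_3,Y]$. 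Hence $P\neq0$; in fact its $Y^m$-coefficient is $\prod_i Q_i\neq0$.

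Now fix $(a,b,p)\in\cA$ and choose the index $i$ supplied by the hypothesis. Then $Q_i(a,b,p)\neq0$ and
\[
P_i(a,b,p)=Q_i(a,b,p)\,g\!\left(\frac{\gen{a,b}}{p}\right).
\]
So the $i$th factor vanishes at $Y=g(\gen{a,b}/p)$, and therefore $P(a,b,p,g(\gen{a,b}/p))=0$. Since this holds for every point of $\cA$, $P$ is a nonzero polynomial relation, which contradicts Theorem~\ref{thm:main}.

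There is no real obstacle here. The only point needing care is that $P$ is nonzero, and this follows because no $Q_i$ vanishes identically, even though individual $Q_i$ may vanish at some points of $\cA$. If one prefers not to invoke lowest terms, any representation of $R_i$ with a nonzero denominator works equally well, since definedness at a point ensures that the chosen denominator is nonzero there.
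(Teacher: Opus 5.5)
Your proof is correct and is essentially the paper's argument: clear denominators, form $\prod_i (Q_iY-P_i)$, observe it is nonzero in the integral domain $\CC[X_1,X_2,X_3,Y]$, check that it vanishes on the graph over $\cA$, and contradict Theorem~\ref{thm:main}. One small quibble concerns your closing aside: for a non-reduced representation $A/B$, definedness does not force $B$ to be nonzero at the point (e.g.\ $X_1(X_1-1)/(X_1-1)$ at $X_1=1$), but the argument still works, because $A=hP_i$ and $B=hQ_i$ give $BY-A=h\,(Q_iY-P_i)$, which vanishes wherever $Q_iY-P_i$ does.
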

\begin{proof}
Write $R_i=\frac{A_i}{B_i}$, $A_i,B_i\in\CC[X_1,X_2,X_3]$, $B_i\neq0$. Define
\[P(X_1,X_2,X_3,Y):=\prod_{i=1}^{m}\bigl(B_i(X_1,X_2,X_3)Y-A_i(X_1,X_2,X_3)\bigr).
\]
Each factor is nonzero because $B_i\neq0$, and the polynomial ring over
$\CC$ is an integral domain.  Hence $P\neq0$.
For a given $(a,b,p)\in\cA$, choose an index $i$ for which $R_i$ is defined
and equals the Frobenius number.  Then $B_i(a,b,p)\neq0$ and $B_i(a,b,p)g\!\left(\frac{\gen{a,b}}{p}\right)-A_i(a,b,p)=0$.
Thus one factor of $P$ vanishes, so $P\left(a,b,p,g\!\left(\frac{\gen{a,b}}{p}\right)\right)=0$.
This contradicts Theorem~\ref{thm:main}.
\end{proof}

Theorem~\ref{thm:main} is an algebraic nonexistence theorem.  It does not
assert that the Frobenius number of $\gen{a,b}/p$ is uncomputable, nor does it
exclude formulas involving floor functions, modular inverses, minimization,
continued fractions, or a number of cases that grows with $p$.  What it
excludes is a single nonzero polynomial relation of bounded degree, and hence
any finite list of polynomial or rational expressions that is independent of
$p$.

\section{Exact residue class formula for $\langle a,a+1\rangle/p$ and fixed $p$}\label{sec:exact}

After discussing the formula for the Frobenius number $g\!\left(\frac{\gen{a,b}}{p}\right)$, it is natural to consider the formula for the Frobenius number $g\!\left(\frac{\gen{a,a+1}}{p}\right)$ under the stronger condition $b=a+1$.

The quotient of the numerical semigroup $\gen{a,a+1}$ is an important object of study.
In \cite{Rosales03}, a numerical semigroup $S(a,b,c)$ is called \emph{proportional modular} if
$S(a,b,c)=\{x\in \mathbb{N} \mid ax \mod b \leq cx\}$ and $a,b,c \in \mathbb{P}$. 
It is shown \cite{AMRobles08} that a numerical semigroup is proportional modular if and only if it is the quotient of $\gen{a,a+1}$.

\subsection{Quasi-polynomial of period $p$}

We now evaluate the minimum in \cref{eq:w-triangle}.  The result is valid
for every integer $p\ge2$; primality is not needed in this section.
For $p\ge1$ and $u\in\ZZ$, let $\remp_p(u)$ denote the unique integer $v\in\{0,1,\ldots,p-1\}$ such that
$v\equiv u\pmod p$.

Fix $p\ge2$ and $r\in\{0,1,\ldots,p-1\}$.  For $i\in\{0,1,\ldots,p-1\}$, set
\begin{align*}
\delta_{p,r}(i):=\remp_p\bigl(r^2-r-1-ir\bigr).
\end{align*}

\begin{lemma}\label{lem:stopping-index}
For every $p\ge2$ and every $r\in\{0,1,\ldots,p-1\}$, the set $\{i\in\{0,1,\ldots,p-1\}:\delta_{p,r}(i)\le i\}$
is nonempty.
\end{lemma}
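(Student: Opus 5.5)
Simply check $i=p-1$. Since $\delta_{p,r}(i)$ is by definition a residue in $\{0,1,\ldots,p-1\}$, we always have $\delta_{p,r}(p-1)\le p-1$. Hence $i=p-1$ belongs to the set, and the set is nonempty. This works for every $p\ge2$ and every $r$, with no case distinction.

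This is the whole argument; no step is a real obstacle. The only care needed is to confirm that the index range $\{0,\ldots,p-1\}$ is nonempty, which holds because $p\ge2$. It is also worth noting that the triviality of the lemma means $I_p(r)\le p-1$ and $J_p(r)\le I_p(r)$, so that $I_p(r)$ and $J_p(r)$ are well defined.

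If a more informative statement were wanted for later use in proving Theorem~\ref{thm:intro-exact}, I would also record a sharper bound. The map $i\mapsto r^2-r-1-ir$ is an arithmetic progression with step $-r$ modulo $p$. When $r=0$ the value is constantly $\remp_p(-1)=p-1$, so $I_p(0)=p-1$ and $J_p(0)=p-1$. This sharper bound is not needed for the lemma as stated, so I would present only the one-line argument above.
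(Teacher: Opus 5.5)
Your proof is correct and is exactly the paper's argument: the paper also just observes that $i=p-1$ lies in the set, since $\delta_{p,r}(p-1)\in\{0,1,\ldots,p-1\}$ forces $\delta_{p,r}(p-1)\le p-1$. Your side remark that $I_p(0)=J_p(0)=p-1$ is also correct, and it agrees with the paper's table for $p=5$.
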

\begin{proof}
We can check that $i=p-1$ belongs to this set.
\end{proof}

By Lemma~\ref{lem:stopping-index}, the following definition is meaningful.

\begin{definition}\label{def:IJ}
For $p\ge2$ and $r\in\{0,1,\ldots,p-1\}$, define
\begin{align}
 I_p(r)&:=\min\{i\in\{0,1,\ldots,p-1\}:\delta_{p,r}(i)\le i\},\label{eq:def-I}\\
 J_p(r)&:=\delta_{p,r}\bigl(I_p(r)\bigr).\nonumber
\end{align}
Then we have 
\begin{equation}\label{eq:IJ-bounds}
   0\le J_p(r)\le I_p(r)\le p-1.
\end{equation}
\end{definition}

\begin{theorem}\label{thm:exact-formula}
Let $a>p\ge2$, let $r=\remp_p(a)$. Then $w(a,p)=I_p(r)a+J_p(r)$, and hence
\begin{equation}\label{eq:G-exact}
G(a,p)=\frac{a^2-(I_p(r)+1)a-(J_p(r)+1)}{p}.
\end{equation}
\end{theorem}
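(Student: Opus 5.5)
The plan is to evaluate the finite minimum \cref{eq:w-triangle} of Corollary~\ref{cor:modular-minimization} directly, and then substitute into \cref{eq:G-via-w}. First I reduce the congruence condition modulo $p$. Since $a\equiv r\pmod p$, we have $F_a=a^2-a-1\equiv r^2-r-1\pmod p$. So for a pair $(i,j)$ the condition $ia+j\equiv F_a\pmod p$ is equivalent to
\[
j\equiv r^2-r-1-ir\pmod p .
\]
It depends on $i$ only through its residue, and for $0\le i\le p-1$ the least nonnegative $j$ satisfying it is exactly $\delta_{p,r}(i)$.

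Next I describe the feasible pairs row by row. Fix a row $i$ with $0\le i\le a-2$. The admissible $j$ are those with $0\le j\le i$ lying in the residue class above. Such a $j$ exists if and only if the smallest nonnegative member of that class is $\le i$. For $i\le p-1$ this smallest member is $\delta_{p,r}(i)$, so row $i$ is feasible exactly when $\delta_{p,r}(i)\le i$, and then its smallest admissible $j$ is $\delta_{p,r}(i)$.

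The key ordering observation is that $ia+j$ is lexicographic in $(i,j)$. Indeed, if $i<i'$ and $0\le j\le i\le a-2$, then
\[
ia+j\le ia+a-1<i'a\le i'a+j' .
\]
Hence the minimum in \cref{eq:w-triangle} is attained in the smallest feasible row, at the smallest admissible $j$ in that row.

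By Lemma~\ref{lem:stopping-index}, row $p-1$ is feasible. It also lies in the allowed range $i\le a-2$, because $a>p$ gives $a-2\ge p-1$. Therefore the smallest feasible row lies among $0,\dots,p-1$, where the description via $\delta_{p,r}$ applies, and that row is $I_p(r)$ by \eqref{eq:def-I}. Rows $i\ge p$ never need to be examined. Its smallest admissible $j$ is $J_p(r)=\delta_{p,r}(I_p(r))$, which by \eqref{eq:IJ-bounds} satisfies $J_p(r)\le I_p(r)$, so the pair is admissible. Thus
\[
w(a,p)=I_p(r)\,a+J_p(r).
\]

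Substituting this into \cref{eq:G-via-w} gives \cref{eq:G-exact}. The quasi-polynomial statement follows because $I_p(r)$ and $J_p(r)$ depend only on $r=\remp_p(a)$.

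The only delicate point is checking that the cutoff $i\le a-2$ does not interfere, and this is exactly where the hypothesis $a>p$ is used. The rest is bookkeeping.
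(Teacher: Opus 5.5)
Your proposal is correct and is essentially the paper's argument: the paper likewise reduces the congruence to $v\equiv r^2-r-1-qr\pmod p$, rules out a quotient $q<I_p(r)$ by the minimality of $I_p(r)$, shows that $q=I_p(r)$ forces $v=J_p(r)$, and shows that $q>I_p(r)$ gives $s\ge (I_p(r)+1)a>I_p(r)a+J_p(r)$. The only differences are cosmetic: you minimize over the finite triangle \eqref{eq:w-triangle} using the lexicographic ordering (via $j\le a-2$), whereas the paper takes an arbitrary admissible $s=qa+v$, controls it with Lemma~\ref{lem:membership}, and uses $J_p(r)\le p-1<a$ for the final comparison.
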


\begin{proof}
We divide the proof into four steps.

\smallskip
\noindent\emph{Firstly: the candidate $I_p(r)a+J_p(r)$ belongs to the semigroup.}
By \cref{eq:IJ-bounds}, $0\le J_p(r)\le I_p(r)$.  Therefore $I_p(r)a+J_p(r)=(I_p(r)-J_p(r))a+J_p(r)(a+1)$.
Both coefficients on the right hand side are nonnegative integers.  Thus $I_p(r)a+J_p(r)\in\gen{a,a+1}$.

\smallskip
\noindent\emph{Secondly: the candidate lies in the required congruence class.}
Since $a\equiv r\pmod p$, one has
\begin{equation}\label{eq:F-residue}
   F_a=a^2-a-1\equiv r^2-r-1\pmod p.
\end{equation}
By the definition of $J_p(r)$, $J_p(r)=\remp_p(r^2-r-1-I_p(r)r)$, so $I_p(r)r+J_p(r)\equiv r^2-r-1\pmod p$.
Replacing $r$ by the congruent integer $a$ gives $I_p(r)a+J_p(r)\equiv F_a\pmod p$.
Thus $I_p(r)a+J_p(r)$ is an admissible element in the minimum defining $w(a,p)$.

\smallskip
\noindent\emph{Thirdly: the candidate is smaller than every other admissible semigroup element.}
Let $s\in\gen{a,a+1}$ satisfy
\begin{equation}\label{eq:s-admissible}
   s\equiv F_a\pmod p.
\end{equation}
Write the Euclidean division of $s$ by $a$ as $s=qa+v$, $q\in\NN$, $0\le v\le a-1$.
Since $s\in\gen{a,a+1}$, Lemma~\ref{lem:membership} gives $v\le q$.
We compare $q$ with $I_p(r)$.

Suppose first that $q<I_p(r)$.  Since $I_p(r)\le p-1$, one has $q\le I_p(r)-1\le p-2$.
Together with $v\le q$, this implies $0\le v\le q<p$.
Reducing \cref{eq:s-admissible} modulo $p$ and using $a\equiv r\pmod p$
and \cref{eq:F-residue}, we obtain $qr+v\equiv r^2-r-1\pmod p$, or equivalently, $v\equiv r^2-r-1-qr\pmod p$.
Because $v\in\{0,1,\ldots,p-1\}$, uniqueness of the least nonnegative
residue yields $v=\delta_{p,r}(q)$.
But $v\le q$, so $\delta_{p,r}(q)\le q$.  This contradicts the minimality of
$I_p(r)$ in \cref{eq:def-I}, because $q<I_p(r)$.  Hence the case $q<I_p(r)$ is impossible.

Suppose next that $q=I_p(r)$.  Then$v\le q$ and $I_p(r)\le p-1$ imply
$0\le v\le p-1$.  The same congruence calculation gives $v=\delta_{p,r}(I_p(r))=J_p(r)$. Therefore $s=I_p(r)a+J_p(r)$.

Finally, suppose that $q>I_p(r)$.  Since $q$ and $I_p(r)$ are integers, $q\ge I_p(r)+1$.
Thus $s=qa+v\ge(I_p(r)+1)a$.
On the other hand, $J_p(r)\le p-1$ and $p<a$ give
$$I_p(r)a+J_p(r)\le I_p(r)a+(p-1)<I_p(r)a+a=(I_p(r)+1)a.$$
Therefore, we obtain $s>I_p(r)a+J_p(r)$.

We have proved that every admissible $s$ is either equal to $I_p(r)a+J_p(r)$ or is
strictly larger.  Therefore $w(a,p)=I_p(r)a+J_p(r)$.

\smallskip
\noindent\emph{Fourth: evaluation of the Frobenius number.}
By Corollary~\ref{cor:modular-minimization}, substituting $F_a=a^2-a-1$ and $w(a,p)=I_p(r)a+J_p(r)$ gives
\begin{align*}
   G(a,p)=\frac{a^2-a-1-I_p(r)a-J_p(r)}{p}=\frac{a^2-(I_p(r)+1)a-(J_p(r)+1)}{p}.
\end{align*}
This is \cref{eq:G-exact}.
\end{proof}

The numerator in \cref{eq:G-exact} is divisible by $p$ because
\[ a^2-(I_p(r)+1)a-(J_p(r)+1)=F_a-(I_p(r)a+J_p(r))\equiv0\pmod p.\]

\begin{definition}\label{def:quasipolynomial}
A function $h:\ZZ_{>0}\to\CC$ is a \emph{quasi-polynomial of period $p$} on a specified range if there exist polynomials
$Q_0,\ldots,Q_{p-1}\in\CC[X]$ such that
\[h(n)=Q_r(n)\quad\text{whenever}\quad n\equiv r\pmod p
\]
throughout that range. The actual minimal period may be a proper divisor of $p$.
\end{definition}

\begin{corollary}\label{cor:quasipolynomial}
Fix $p\ge2$.  For $r\in\{0,\ldots,p-1\}$, define
\begin{align*}
P_{p,r}(X):=\frac{X^2-(I_p(r)+1)X-(J_p(r)+1)}{p}.
\end{align*}
Then, for every integer $a>p$ with $a\equiv r\pmod p$, $G(a,p)=P_{p,r}(a)$.
Thus $a\mapsto G(a,p)$ is a quadratic quasi-polynomial of period dividing $p$ on the range $a>p$.
\end{corollary}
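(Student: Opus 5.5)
This corollary is a direct repackaging of Theorem~\ref{thm:exact-formula}; the plan is to read the formula off the theorem and then check that it matches Definition~\ref{def:quasipolynomial}.

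Fix $p\ge2$ and an integer $a>p$ with $a\equiv r\pmod p$, where $r\in\{0,\ldots,p-1\}$. Then $r=\remp_p(a)$. Theorem~\ref{thm:exact-formula} applies directly and gives
\[G(a,p)=\frac{a^2-(I_p(r)+1)a-(J_p(r)+1)}{p},\]
which is exactly $P_{p,r}(a)$ by definition of $P_{p,r}$.

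Next, I would observe why each $P_{p,r}$ is a genuine polynomial depending only on the residue class. The quantities $I_p(r)$ and $J_p(r)$ are integers depending only on $p$ and $r$, not on $a$, because $\delta_{p,r}$ is defined purely in terms of $p$ and $r$. Hence $P_{p,r}\in\CC[X]$ has leading term $X^2/p\neq0$, so it is quadratic.

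Setting $Q_r:=P_{p,r}$ for $r=0,\ldots,p-1$ then verifies Definition~\ref{def:quasipolynomial} on the range $a>p$, with period $p$. Since the definition explicitly allows the minimal period to be a proper divisor of $p$, the statement ``period dividing $p$'' follows with nothing further to prove.

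There is no real obstacle here. The only point needing care is that $r=\remp_p(a)$ coincides with the residue $r$ in the statement, which holds by uniqueness of least nonnegative residues.
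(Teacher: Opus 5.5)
Your proposal is correct and takes the same approach as the paper. The paper also treats the corollary as an immediate restatement of Theorem~\ref{thm:exact-formula}, noting that $I_p(r)$ and $J_p(r)$ depend only on $p$ and the residue class $r$.
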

\begin{proof}
This is an immediate restatement of Theorem~\ref{thm:exact-formula}, since $I_p(r)$ and $J_p(r)$ depend only on $p$ and the residue class $r$.
\end{proof}

\begin{remark}[An algorithm]\label{rem:algorithm}
Theorem~\ref{thm:exact-formula} gives the following terminating procedure for computing $G(a,p)$ when $a>p\ge2$:
\begin{enumerate}
\item compute $r=\remp_p(a)$;
\item for $i=0,1,\ldots,p-1$, compute $\delta_{p,r}(i)=\remp_p(r^2-r-1-ir)$;
\item stop at the first $i$ for which $\delta_{p,r}(i)\le i$;
\item set $I_p(r)=i$, $J_p(r)=\delta_{p,r}(i)$, and evaluate \cref{eq:G-exact}.
\end{enumerate}
Lemma~\ref{lem:stopping-index} proves termination no later than $i=p-1$.
This algorithm is a formula involving a finite minimization, but it is not a finite collection of polynomial or rational expressions independent of $p$.
\end{remark}

\begin{example}\label{ex:p=5}
For $p=5$, direct application of Definition~\ref{def:IJ} gives
\[
\begin{array}{c|cc|c}
 r & I_5(r) & J_5(r) & P_{5,r}(X)\\ \hline
 0 & 4 & 4 & \dfrac{X^2-5X-5}{5}\\[4pt]
 1 & 2 & 2 & \dfrac{X^2-3X-3}{5}\\[4pt]
 2 & 2 & 2 & \dfrac{X^2-3X-3}{5}\\[4pt]
 3 & 0 & 0 & \dfrac{X^2-X-1}{5}\\[4pt]
 4 & 4 & 0 & \dfrac{X^2-5X-1}{5}
\end{array}
\]
Thus four distinct quadratic branches occur.  For example, if $a=9$, then
$a\equiv4\pmod5$, and $G(9,5)=\frac{9^2-5\cdot9-1}{5}=7$. Equivalently, the largest integer $n$ for which $5n\notin\gen{9,10}$ is $n=7$.
\end{example}

\subsection{Further discussion for fixed $p$}\label{sec:branches}

For fixed $p$, different residue classes may yield the same pair
$(I_p(r),J_p(r))$.  We now show that, when $p$ is prime, no pair can arise
from more than two residue classes.  

For $p\ge2$, define
\begin{align*}
 \cB_p&:=\{(I_p(r),J_p(r)):r=0,1,\ldots,p-1\},\\
 \cB_p^{\times}&:=\{(I_p(r),J_p(r)):r=1,2,\ldots,p-1\}.
\end{align*}
The second set records only the nonzero residue classes modulo $p$.
For $(i,j)\in\cB_p$, define the associated polynomial
$$P_{p;i,j}(X):=\frac{X^2-(i+1)X-(j+1)}{p}.$$
If $(i,j)=(I_p(r),J_p(r))$, then $P_{p;i,j}(X)=P_{p,r}(X)$.

\begin{lemma}\label{lem:fiber-two}
Let $p$ be prime and let $(i,j)\in\cB_p$.  Then there are at most two
residues $r\in\{0,1,\ldots,p-1\}$ satisfying $(I_p(r),J_p(r))=(i,j)$.
The same bound holds after restricting to nonzero residues.
\end{lemma}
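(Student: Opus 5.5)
Suppose $(I_p(r),J_p(r))=(i,j)$. By the definitions of $\delta_{p,r}$ and $J_p(r)$,
\[
j\equiv r^2-r-1-ir\pmod p,
\]
that is,
\[
r^2-(i+1)r-(j+1)\equiv 0\pmod p .
\]
So every residue $r$ mapped to the pair $(i,j)$ is a root in $\FF_p$ of the monic quadratic
\[
Q_{i,j}(T):=T^2-(i+1)T-(j+1)\in\FF_p[T].
\]
Equivalently, $p\,P_{p;i,j}(r)\equiv 0$ modulo $p$, which matches the fact that the numerator in \cref{eq:G-exact} is divisible by $p$.

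Since $p$ is prime, $\FF_p$ is a field. The polynomial $Q_{i,j}$ is monic of degree $2$, so it is nonzero, and Lemma~\ref{lem:root-bound} applied with $K=\FF_p$ shows it has at most two roots in $\FF_p$. The residues in $\{0,1,\ldots,p-1\}$ are in bijection with $\FF_p$, so at most two residues $r$ can satisfy $(I_p(r),J_p(r))=(i,j)$.

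The statement for nonzero residues follows immediately, because restricting to $r\in\{1,\ldots,p-1\}$ only shrinks the fiber.

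There is no real obstacle in this argument. The only points to handle carefully are that we use just the congruence coming from $J_p(r)=\delta_{p,r}(I_p(r))$, not the minimality in the definition of $I_p(r)$, and that we reduce $i,j$ modulo $p$ without any issue since $0\le j\le i\le p-1$. Primality is used exactly once, to guarantee that $\ZZ/p\ZZ$ is an integral domain so the root bound holds. For composite $p$ a quadratic can have more than two roots, which is why the hypothesis is needed.
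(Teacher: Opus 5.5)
Your proposal is correct and follows essentially the same argument as the paper: the defining congruence $J_p(r)=\delta_{p,r}(I_p(r))$ makes every $r$ in the fiber a root of the nonzero quadratic $Z^2-(i+1)Z-(j+1)$ over the field $\FF_p$, which has at most two roots. The nonzero-residue case then follows because the fiber only shrinks.
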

\begin{proof}
Suppose $(I_p(r),J_p(r))=(i,j)$.  By the definition of $J_p(r)$, $j\equiv r^2-r-1-ir\pmod p$.
Rearranging gives $r^2-(i+1)r-(j+1)\equiv0\pmod p$.
Thus every residue $r$ is a root in the field $\FF_p$ of the polynomial
\[
   Z^2-(i+1)Z-(j+1)\in\FF_p[Z].
\]
This is a nonzero polynomial of degree two.  A nonzero degree-two polynomial over a field has at most two roots.
This completes the proof.
\end{proof}

By Lemma \ref{lem:fiber-two}, we obtain the following result.
\begin{proposition}\label{prop:branch-count}
Let $p$ be prime. Then we have $|\cB_p|\ge\left\lceil\frac{p}{2}\right\rceil$.
If $p$ is odd, then $|\cB_p^{\times}|\ge\frac{p-1}{2}$.
\end{proposition}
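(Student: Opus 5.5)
This is a counting argument built on Lemma~\ref{lem:fiber-two}. We consider the map
\[
\rho\colon\{0,1,\ldots,p-1\}\longrightarrow\cB_p,\qquad r\longmapsto\bigl(I_p(r),J_p(r)\bigr).
\]
By the definition of $\cB_p$, this map is surjective. By Lemma~\ref{lem:fiber-two}, every fiber $\rho^{-1}(i,j)$ has at most two elements, because each residue in the fiber is a root in $\FF_p$ of the nonzero quadratic $Z^2-(i+1)Z-(j+1)$.

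Since the domain is partitioned into the fibers over the points of $\cB_p$, summing the fiber sizes gives
\[
p=\sum_{(i,j)\in\cB_p}\#\rho^{-1}(i,j)\le 2\,|\cB_p|.
\]
Therefore $|\cB_p|\ge p/2$. Because $|\cB_p|$ is an integer, we conclude $|\cB_p|\ge\lceil p/2\rceil$.

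For the second assertion, we restrict $\rho$ to the nonzero residues $\{1,\ldots,p-1\}$. By definition, this restriction surjects onto $\cB_p^{\times}$. Lemma~\ref{lem:fiber-two} also states that the fiber bound of two persists after restricting to nonzero residues, so the same double counting gives $p-1\le 2|\cB_p^{\times}|$. Hence $|\cB_p^{\times}|\ge (p-1)/2$, and this is an integer when $p$ is odd.

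There is no real obstacle here, since the fiber bound is supplied entirely by Lemma~\ref{lem:fiber-two}. The only point to check is that primality of $p$ is what makes $\FF_p$ a field, so that a degree-two polynomial has at most two roots. The case $p=2$ of the first inequality is covered by the same count, giving $|\cB_2|\ge1$.
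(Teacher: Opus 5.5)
Your proof is correct and takes the same approach as the paper. The paper derives Proposition~\ref{prop:branch-count} directly from Lemma~\ref{lem:fiber-two}, which says each fiber of $r\mapsto(I_p(r),J_p(r))$ has at most two elements, and your double counts $p\le 2|\cB_p|$ and $p-1\le 2|\cB_p^{\times}|$ simply spell out that step.
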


\begin{lemma}\label{lem:distinct-polynomials}
Fix $p\ge2$. If $P_{p;i,j}(X)=P_{p;i',j'}(X)$ as polynomials in $\CC[X]$, then $(i,j)=(i',j')$.
\end{lemma}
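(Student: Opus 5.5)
The plan is to compare coefficients directly. By definition,
\[
P_{p;i,j}(X)=\frac{1}{p}X^2-\frac{i+1}{p}X-\frac{j+1}{p},\qquad
P_{p;i',j'}(X)=\frac{1}{p}X^2-\frac{i'+1}{p}X-\frac{j'+1}{p}.
\]
Suppose these are equal as elements of $\CC[X]$. Then their difference is the zero polynomial:
\[
P_{p;i,j}(X)-P_{p;i',j'}(X)=\frac{(i'-i)X+(j'-j)}{p}=0 .
\]

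Two polynomials over $\CC$ are equal exactly when all their coefficients agree. Since $p\ge 2$ is a nonzero scalar, the vanishing of the linear coefficient gives $i'-i=0$, and the vanishing of the constant term gives $j'-j=0$. Hence $(i,j)=(i',j')$.

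No result from earlier in the paper is needed beyond the definition of $P_{p;i,j}$. In particular, the bounds \cref{eq:IJ-bounds} and primality of $p$ play no role: the map $(i,j)\mapsto P_{p;i,j}$ is injective on all of $\ZZ^2$. I see no real obstacle here. The only point to state explicitly is that "equality as polynomials" means equality of coefficients, not merely agreement of values at integers $a>p$. The latter would also suffice, since a polynomial vanishing at infinitely many points is zero, but coefficient comparison is the direct route.
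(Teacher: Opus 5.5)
Your proposal is correct and matches the paper's proof, which likewise multiplies through by $p$ and compares the coefficient of $X$ and then the constant term to get $i=i'$ and $j=j'$. Your side remark that neither the bounds on $(i,j)$ nor the primality of $p$ is needed is also accurate.
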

\begin{proof}
Multiplying the asserted equality by $p$ gives $-(i+1)X-(j+1)=-(i'+1)X-(j'+1)$.
Equality of the coefficients of $X$ yields $i=i'$, and equality of the constant terms then yields $j=j'$.
\end{proof}

For fixed $p$, define
\begin{align*}
 \Psi_p(X,Y):=\prod_{(i,j)\in\cB_p}\bigl(pY-X^2+(i+1)X+(j+1)\bigr).
\end{align*}
This polynomial has $Y$-degree $|\cB_p|$.

\begin{theorem}\label{thm:fixed-p-degree}
Fix $p\ge2$. We have: (i): The nonzero polynomial $\Psi_p$ satisfies $\Psi_p(a,G(a,p))=0$ for every integer $a>p$.
(ii): If $H\in\CC[X,Y]$ is nonzero and $H(a,G(a,p))=0$ for every integer $a>p$, then $\deg_Y H\ge|\cB_p|$.
Consequently, $|\cB_p|$ is the minimum possible $Y$-degree.
\end{theorem}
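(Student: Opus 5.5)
For part (i), I will use Corollary~\ref{cor:quasipolynomial}. Fix an integer $a>p$ and put $r=\remp_p(a)$. Then $G(a,p)=P_{p,r}(a)$, that is, $pG(a,p)-a^2+(I_p(r)+1)a+(J_p(r)+1)=0$. Since $(I_p(r),J_p(r))\in\cB_p$, this expression is one of the factors of $\Psi_p(a,G(a,p))$, so the product vanishes. Each factor is linear in $Y$ with leading coefficient $p\neq0$. Hence $\Psi_p$ is nonzero, with $Y$-degree exactly $|\cB_p|$.

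For part (ii), let $H\neq0$ vanish at $(a,G(a,p))$ for all $a>p$. I will show that for each $(i,j)\in\cB_p$ the polynomial $P_{p;i,j}(X)$ is a root of $H$, regarded as an element of $\CC(X)[Y]$.

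Choose $r$ with $(I_p(r),J_p(r))=(i,j)$. For every $a>p$ with $a\equiv r\pmod p$ we have $G(a,p)=P_{p;i,j}(a)$. The one-variable polynomial $g_{i,j}(X):=H\bigl(X,P_{p;i,j}(X)\bigr)\in\CC[X]$ therefore vanishes at infinitely many integers, namely all sufficiently large members of $r+p\ZZ$. So $g_{i,j}\equiv0$.

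By Lemma~\ref{lem:distinct-polynomials}, distinct pairs in $\cB_p$ give distinct polynomials $P_{p;i,j}$. Thus $H$, as a nonzero element of $K[Y]$ with $K=\CC(X)$, has at least $|\cB_p|$ distinct roots. Lemma~\ref{lem:root-bound} then gives $\deg_Y H\ge|\cB_p|$.

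One point needs care: $H$ must not collapse to zero in $K[Y]$. This is automatic, because $\CC[X,Y]\subset K[Y]$ injectively and $H\neq0$. If $H$ happens to have $Y$-degree $0$, it is a nonzero polynomial in $X$ alone. It cannot then vanish at all integers $a>p$, and the argument above covers this case as well.

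The final sentence of the theorem combines (i) and (ii). By (i), $\Psi_p$ is a nonzero relation of $Y$-degree $|\cB_p|$, and by (ii) no nonzero relation has smaller $Y$-degree. No step looks like a genuine obstacle. The only substantive input is that every residue class $r$ contains infinitely many $a>p$, so each branch $P_{p,r}$ is realized infinitely often.
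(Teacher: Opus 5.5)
Your proposal is correct and follows essentially the same route as the paper. Part (i) comes from Corollary~\ref{cor:quasipolynomial}, and part (ii) shows that each $P_{p;i,j}$ is a root of $H$ in $\CC(X)[Y]$, since $H(X,P_{p;i,j}(X))$ vanishes on an infinite residue class; Lemma~\ref{lem:distinct-polynomials} then supplies $|\cB_p|$ distinct roots. Your extra remarks, that $\Psi_p$ has leading $Y$-coefficient $p^{|\cB_p|}\neq0$ and that the $Y$-degree-zero case is already covered, only make explicit what the paper leaves implicit.
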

\begin{proof}
The part (i) is an immediate consequence. For part (ii), fix $(i,j)\in\cB_p$.  By definition of $\cB_p$, there exists
$r\in\{0,1,\ldots,p-1\}$ such that $(I_p(r),J_p(r))=(i,j)$.
There are infinitely many integers $a>p$ satisfying $a\equiv r\pmod p$.
For every such $a$, Corollary~\ref{cor:quasipolynomial} gives $G(a,p)=P_{p;i,j}(a)$.
Therefore the one-variable polynomial
\[h_{i,j}(X):=H\bigl(X,P_{p;i,j}(X)\bigr)\in\CC[X]
\]
vanishes at infinitely many integers $a$.  A nonzero polynomial over $\CC$
has only finitely many roots, so
\begin{equation}\label{eq:h-identically-zero}
   H\bigl(X,P_{p;i,j}(X)\bigr)=0
\end{equation}
identically in $\CC[X]$.

Now regard $H$ as a polynomial in $Y$ over the field $\CC(X)$. \cref{eq:h-identically-zero} says that $P_{p;i,j}(X)\in\CC(X)$ is a root of this polynomial.  As $(i,j)$ ranges over $\cB_p$, these roots are distinct by
Lemma~\ref{lem:distinct-polynomials}.  A nonzero polynomial of degree
$\deg_YH$ over a field has at most $\deg_YH$ distinct roots.  Hence $\deg_YH\ge|\cB_p|$.  Part (i) supplies a relation of exactly
that degree, so the lower bound is sharp.
\end{proof}

Combining Theorem~\ref{thm:fixed-p-degree} with Proposition~\ref{prop:branch-count} gives the following.

\begin{corollary}\label{cor:unbounded-fixed-p}
If $p$ is prime and $H\in\CC[X,Y]$ is a nonzero polynomial satisfying $H(a,G(a,p))=0$ for every integer $a>p$, then $\deg_YH\ge\left\lceil\frac{p}{2}\right\rceil$.
Thus the minimum possible $Y$-degree of a fixed-$p$ relation tends to infinity with $p$ through the primes.
\end{corollary}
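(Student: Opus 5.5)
This corollary is a direct combination of two earlier results.

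The plan is to apply Theorem~\ref{thm:fixed-p-degree}(ii) to the given $H$ and then bound $|\cB_p|$ from below using Proposition~\ref{prop:branch-count}.

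Let $p$ be prime and let $H\in\CC[X,Y]$ be nonzero with $H(a,G(a,p))=0$ for every integer $a>p$. These are exactly the hypotheses of Theorem~\ref{thm:fixed-p-degree}(ii), which therefore gives $\deg_Y H\ge|\cB_p|$.

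To recall why this holds: for each branch $(i,j)\in\cB_p$, the residue class with that branch contains infinitely many integers $a>p$. On each such $a$ we have $G(a,p)=P_{p;i,j}(a)$, so $H(X,P_{p;i,j}(X))$ has infinitely many roots and is therefore identically zero. By Lemma~\ref{lem:distinct-polynomials}, the polynomials $P_{p;i,j}$ for distinct branches are distinct roots in $\CC(X)$ of $H$, viewed as a polynomial in $Y$.

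Next, Proposition~\ref{prop:branch-count} gives $|\cB_p|\ge\lceil p/2\rceil$. This comes from Lemma~\ref{lem:fiber-two}: the map $r\mapsto(I_p(r),J_p(r))$ on the $p$ residues has fibers of size at most $2$. Each residue $r$ in a fiber is a root in $\FF_p$ of $Z^2-(i+1)Z-(j+1)$, and a quadratic over a field has at most two roots. Chaining the two inequalities gives $\deg_Y H\ge\lceil p/2\rceil$.

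For the final sentence of the corollary, $\lceil p/2\rceil\to\infty$ as $p$ runs through the primes. Theorem~\ref{thm:fixed-p-degree}(i) shows the minimum $Y$-degree is attained and equals $|\cB_p|$. Hence the minimum possible $Y$-degree of a fixed-$p$ relation is at least $\lceil p/2\rceil$, and so tends to infinity.

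There is no genuine obstacle here. The only point requiring care is that the primality of $p$ is used solely through Lemma~\ref{lem:fiber-two}, where $\ZZ/p\ZZ$ must be a field for the two-root bound to hold.
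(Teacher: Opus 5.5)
Your proposal is correct and is the same argument the paper uses. The paper obtains the corollary by chaining Theorem~\ref{thm:fixed-p-degree}(ii), which gives $\deg_Y H\ge|\cB_p|$, with the bound $|\cB_p|\ge\lceil p/2\rceil$ from Proposition~\ref{prop:branch-count} via Lemma~\ref{lem:fiber-two}, and your observation that primality enters only through the two-root bound over $\FF_p$ is accurate.
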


We next record the corresponding result when $a$ is also restricted to primes.  

\begin{theorem}\label{thm:fixed-p-prime-a}
Let $p>2$ be prime, and let $H\in\CC[X,Y]$ be nonzero.  Suppose
\begin{equation}\label{eq:H-prime-a}
   H(a,G(a,p))=0
\end{equation}
for every prime $a>p$. Then $\deg_YH\ge|\cB_p^{\times}|\ge\frac{p-1}{2}$.
\end{theorem}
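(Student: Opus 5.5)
I will follow the proof of Theorem~\ref{thm:fixed-p-degree}(ii), with Dirichlet's theorem supplying infinitely many admissible $a$ in each relevant residue class. Fix $(i,j)\in\cB_p^{\times}$ and choose $r\in\{1,\ldots,p-1\}$ with $(I_p(r),J_p(r))=(i,j)$. Since $p$ is prime and $r\neq0$, we have $\gcd(r,p)=1$. Theorem~\ref{thm:Dirichlet} therefore gives infinitely many primes $a\equiv r\pmod p$, and all but finitely many of them satisfy $a>p$. For each such prime, Corollary~\ref{cor:quasipolynomial} gives $G(a,p)=P_{p;i,j}(a)$. The hypothesis \cref{eq:H-prime-a} then says that the one-variable polynomial $h_{i,j}(X):=H\bigl(X,P_{p;i,j}(X)\bigr)\in\CC[X]$ vanishes at infinitely many integers. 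Hence $h_{i,j}\equiv0$.

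Next regard $H$ as a nonzero polynomial in $Y$ over the field $\CC(X)$. Each $P_{p;i,j}(X)$ with $(i,j)\in\cB_p^{\times}$ is a root of $H$, and Lemma~\ref{lem:distinct-polynomials} shows these roots are pairwise distinct. Lemma~\ref{lem:root-bound} then gives $\deg_YH\ge|\cB_p^{\times}|$.

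The remaining inequality $|\cB_p^{\times}|\ge\frac{p-1}{2}$ is the second part of Proposition~\ref{prop:branch-count}. If I need to justify it directly, I use Lemma~\ref{lem:fiber-two}: the map $r\mapsto(I_p(r),J_p(r))$ on the $p-1$ nonzero residues has fibers of size at most two, so its image has at least $\lceil (p-1)/2\rceil=\frac{p-1}{2}$ elements because $p$ is odd.

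I do not expect a real obstacle. The only point needing care is excluding $r=0$: that class contains no prime $a>p$, which is exactly why the bound uses $\cB_p^{\times}$ instead of $\cB_p$. Also, when $\cB_p^{\times}$ is nonempty, $\deg_YH\ge1$ holds automatically, so the degree count is meaningful.
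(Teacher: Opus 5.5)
Your proposal is correct and follows essentially the same route as the paper's proof. You apply Dirichlet's theorem on each nonzero residue class, observe that $H\bigl(X,P_{p;i,j}(X)\bigr)$ vanishes at infinitely many primes, get distinct roots from Lemma~\ref{lem:distinct-polynomials}, and finish with the root bound in $\CC(X)[Y]$; your direct justification of $|\cB_p^{\times}|\ge\frac{p-1}{2}$ via Lemma~\ref{lem:fiber-two} simply spells out what the paper takes from Proposition~\ref{prop:branch-count}.
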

\begin{proof}
Fix $(i,j)\in\cB_p^{\times}$.  By definition, there exists a nonzero residue
$r\in\{1,\ldots,p-1\}$ such that $(I_p(r),J_p(r))=(i,j)$.
Because $p$ is prime and $r\ne0$, one has $\gcd(r,p)=1$.  Dirichlet's theorem
implies that there are infinitely many primes $a$ satisfying
$a\equiv r\pmod p$.  All but finitely many of these primes are greater than
$p$.

For each such prime $a$, Theorem~\ref{thm:exact-formula} gives $G(a,p)=P_{p;i,j}(a)$.
By the assumption \cref{eq:H-prime-a}, the polynomial $H\bigl(X,P_{p;i,j}(X)\bigr)\in\CC[X]$
vanishes at infinitely many prime values of $X$, and therefore is identically
zero.  Hence every polynomial indexed by $\cB_p^{\times}$ is a root of
$H$ when $H$ is regarded as an element of $\CC(X)[Y]$.  These roots are
distinct by Lemma~\ref{lem:distinct-polynomials}.  Consequently, $\deg_YH\ge|\cB_p^{\times}|$. This completes the proof.
\end{proof}

\section{Polynomial nonexistence theorem for $\langle a,a+1\rangle/p$}\label{sec:global}

\subsection{Proof of the polynomial nonexistence theorem II}

We now prove that no fixed polynomial relation can persist as $p$ varies.
The key point is that a nonzero polynomial in the three variables $(X,T,Y)$
has a fixed $Y$-degree, whereas Theorems~\ref{thm:fixed-p-degree} and
\ref{thm:fixed-p-prime-a} force the required $Y$-degree after specialization
$T=p$ to grow linearly with $p$.

The following lemma is similar to Lemma \ref{lem:specialization}; we omit its proof.

\begin{lemma}\label{lem:specialization2}
Let $F\in\CC[X,T,Y]$ be nonzero.  For $t\in\CC$, define $F_t(X,Y):=F(X,t,Y)\in\CC[X,Y]$.
Then $F_t$ is the zero polynomial for only finitely many values of $t$.
Moreover, $\deg_YF_t\le\deg_YF$ for every $t$.
\end{lemma}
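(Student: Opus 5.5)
The plan is to mirror the proof of Lemma~\ref{lem:specialization}, with $X_3$ replaced by $T$ and $(X_1,X_2,Y)$ replaced by $(X,Y)$. I will regard $F$ as a polynomial in $X,Y$ with coefficients in $\CC[T]$:
\[
F(X,T,Y)=\sum_{(i,\ell)\in E}c_{i,\ell}(T)\,X^iY^\ell ,
\]
where $E$ is a finite index set and each $c_{i,\ell}\in\CC[T]$. Since the monomials $X^iY^\ell$ are linearly independent over $\CC[T]$, we may assume $E$ records only those $(i,\ell)$ for which $c_{i,\ell}\neq0$. Because $F\neq0$, the set $E$ is nonempty.

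For the first assertion, fix some $(i_0,\ell_0)\in E$. Specializing $T=t$ gives $F_t=\sum c_{i,\ell}(t)X^iY^\ell$. If $F_t$ is the zero polynomial, every coefficient of $F_t$ vanishes, and in particular $c_{i_0,\ell_0}(t)=0$. The nonzero one-variable polynomial $c_{i_0,\ell_0}$ has only finitely many roots, so only finitely many $t$ can make $F_t$ vanish identically.

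For the degree bound, set $d=\deg_Y F=\max\{\ell:(i,\ell)\in E\}$. After specialization, $F_t$ is a $\CC$-linear combination of the monomials $X^iY^\ell$ with $(i,\ell)\in E$, and all of these satisfy $\ell\le d$. Some coefficients may become zero, so the degree can drop, but no monomial of higher $Y$-degree can appear. Hence $\deg_Y F_t\le d$ whenever $F_t\neq0$. When $F_t=0$, the bound holds under the usual convention $\deg 0=-\infty$, which is also how the bound is used later: only for $t$ with $F_t\neq0$.

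No step is a real obstacle. The only point needing care is the bookkeeping: each monomial $X^iY^\ell$ has a single coefficient in $\CC[T]$, which is what makes both the vanishing criterion and the degree comparison immediate.
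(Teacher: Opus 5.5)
Your proposal is correct and is essentially the argument the paper intends. The paper omits this proof as ``similar to Lemma~\ref{lem:specialization}'', whose proof is exactly your expansion with coefficients in $\CC[T]$ followed by the root count. Your degree bound makes precise the paper's remark, in the proof of Theorem~\ref{thm:main}, that specialization cannot increase the $Y$-degree.
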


\begin{theorem}\label{thm:global-all-a}
There is no nonzero polynomial $F\in\CC[X,T,Y]$ satisfying $F(a,p,G(a,p))=0$ for every $(a,p)\in\cD_{\mathrm{all}}$, where
$\cD_{\mathrm{all}}$ is defined in \cref{eq:Dall}.
\end{theorem}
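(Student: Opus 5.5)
The argument goes by contradiction, specializing the variable $T$ to a large prime and then comparing $Y$-degrees. Suppose a nonzero $F\in\CC[X,T,Y]$ satisfies $F(a,p,G(a,p))=0$ for every $(a,p)\in\cD_{\mathrm{all}}$, and set $d:=\deg_Y F$.

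By Lemma~\ref{lem:specialization2}, the specialization $F_t(X,Y)=F(X,t,Y)$ vanishes identically for only finitely many $t\in\CC$. Since there are infinitely many primes, we can choose a prime $p$ that avoids this finite exceptional set and also satisfies $\lceil p/2\rceil>d$. For this $p$, the polynomial $H:=F_p\in\CC[X,Y]$ is nonzero. Lemma~\ref{lem:specialization2} also gives $\deg_Y H\le d$.

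The hypothesis applies to every pair $(a,p)$ with $a>p$ an integer, because each such pair lies in $\cD_{\mathrm{all}}$. Hence $H(a,G(a,p))=F(a,p,G(a,p))=0$ for every integer $a>p$. This is exactly the hypothesis of Corollary~\ref{cor:unbounded-fixed-p}, which is Theorem~\ref{thm:fixed-p-degree}(ii) combined with Proposition~\ref{prop:branch-count}. It yields $\deg_Y H\ge\lceil p/2\rceil>d$, contradicting $\deg_Y H\le d$.

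I do not expect any step to be a serious obstacle. The only point needing care is that the choice of $p$ must satisfy both requirements at once: $F_p\neq0$, and $p$ large relative to $d$. Both exclude only finitely many primes, so such a $p$ exists. No use of Dirichlet's theorem is needed here, since $a$ ranges over all integers greater than $p$. The substantive content was already supplied by Theorem~\ref{thm:exact-formula} and Lemma~\ref{lem:fiber-two}, which give at least $\lceil p/2\rceil$ distinct quadratic branches, each occurring on infinitely many $a$.
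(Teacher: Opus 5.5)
Your proof is correct and matches the paper's argument essentially step for step. Like the paper, you specialize $T$ to a prime $p$ with $F_p\neq 0$ and $\lceil p/2\rceil>d$, apply Corollary~\ref{cor:unbounded-fixed-p} using all integers $a>p$, and obtain a contradiction with the bound $\deg_Y F_p\le\deg_Y F$ from Lemma~\ref{lem:specialization2}.
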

\begin{proof}
Suppose, for contradiction, that such a nonzero polynomial $F$ exists.  Let $d:=\deg_YF$. By Lemma~\ref{lem:specialization2}, only finitely many complex numbers $t$ have the property that $F_t(X,Y)=F(X,t,Y)$ is identically zero.  Since there are
arbitrarily large primes, we may choose a prime $p$ such that $F_p\ne0$ and $\left\lceil\frac{p}{2}\right\rceil>d$.

For every integer $a>p$, the pair $(a,p)$ belongs to $\cD_{\mathrm{all}}$.  Hence the assumed relation gives $F_p(a,G(a,p))=F(a,p,G(a,p))=0$. The polynomial $F_p\in\CC[X,Y]$ is nonzero.  Corollary
\ref{cor:unbounded-fixed-p} therefore implies $\deg_YF_p\ge\left\lceil\frac{p}{2}\right\rceil$.
On the other hand, Lemma~\ref{lem:specialization2} gives $\deg_YF_p\le\deg_YF=d$.
We have 
\[d\ge\deg_YF_p \ge\left\lceil\frac{p}{2}\right\rceil>d,
\]
an impossibility. Therefore no such nonzero $F$ exists.
\end{proof}

\begin{corollary}\label{cor:zariski-all-a}
The set $\Gamma_{\mathrm{all}}:=\{(a,p,G(a,p)):(a,p)\in\cD_{\mathrm{all}}\}\subseteq\Aff^3_{\CC}$ is Zariski dense in $\Aff^3_{\CC}$.
\end{corollary}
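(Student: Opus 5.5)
The statement is the Zariski-density reformulation of Theorem~\ref{thm:global-all-a}, so the plan is to reduce it directly to that theorem via Definition~\ref{def:zariski}.

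First, I would recall that a subset $E\subseteq\Aff^3_{\CC}$ is Zariski dense exactly when the only polynomial in $\CC[X,T,Y]$ vanishing on all of $E$ is the zero polynomial.

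Next, suppose a polynomial $F\in\CC[X,T,Y]$ vanishes at every point of $\Gamma_{\mathrm{all}}$. Unwinding the definition of $\Gamma_{\mathrm{all}}$, this says exactly that $F(a,p,G(a,p))=0$ for every $(a,p)\in\cD_{\mathrm{all}}$. Theorem~\ref{thm:global-all-a} then forbids $F$ from being nonzero, so $F=0$, and $\Gamma_{\mathrm{all}}$ is dense.

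There is no real obstacle here; all the substance lives in Theorem~\ref{thm:global-all-a}. The only point requiring a moment's care is the identification of variables: the coordinates of $\Aff^3_{\CC}$ must be matched with $(X,T,Y)$ in the same order used in Theorem~\ref{thm:global-all-a}, namely first coordinate $a$, second $p$, third $G(a,p)$. One could also note the equivalent closure formulation. If the Zariski closure $\overline{\Gamma_{\mathrm{all}}}$ were a proper closed subset, it would be cut out by some nonzero polynomial, and that polynomial would vanish on $\Gamma_{\mathrm{all}}$, which is again a contradiction.
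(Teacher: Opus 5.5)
Your proposal is correct and takes the same approach as the paper. The paper's proof is exactly this one-line reduction: a nonzero polynomial vanishing on $\Gamma_{\mathrm{all}}$ would satisfy $F(a,p,G(a,p))=0$ for all $(a,p)\in\cD_{\mathrm{all}}$, which Theorem~\ref{thm:global-all-a} rules out, so Definition~\ref{def:zariski} gives density.
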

\begin{proof}
By Definition~\ref{def:zariski}, failure of Zariski density would mean that
some nonzero polynomial in $\CC[X,T,Y]$ vanishes at every point of
$\Gamma_{\mathrm{all}}$.  This is excluded by
Theorem~\ref{thm:global-all-a}.
\end{proof}

We now prove the sparse version for $(a,p)\in\cD_{\mathrm{pp}}$, which is closest in spirit to Curtis's theorem.

\begin{theorem}\label{thm:global-prime-prime}
There is no nonzero polynomial $F\in\CC[X,T,Y]$ satisfying $F(a,p,G(a,p))=0$ for every $(a,p)\in\cD_{\mathrm{pp}}$, where $\cD_{\mathrm{pp}}$ is defined in \cref{eq:Dpp}.
\end{theorem}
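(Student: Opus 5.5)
The plan is to mirror the proof of Theorem~\ref{thm:global-all-a}, replacing Corollary~\ref{cor:unbounded-fixed-p} by Theorem~\ref{thm:fixed-p-prime-a}. That theorem already carries the Dirichlet input: it only needs $a$ to range over primes in each nonzero residue class modulo $p$.

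Suppose, for contradiction, that $F\in\CC[X,T,Y]$ is nonzero and vanishes at $(a,p,G(a,p))$ for all $(a,p)\in\cD_{\mathrm{pp}}$. Set $d:=\deg_YF$. By Lemma~\ref{lem:specialization2}, the specialization $F_t(X,Y)=F(X,t,Y)$ is identically zero for only finitely many $t\in\CC$, and $\deg_YF_t\le d$ for every $t$. Since there are infinitely many primes, we can choose an odd prime $p>2$ with $F_p\neq0$ and $\frac{p-1}{2}>d$.

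For this fixed $p$, every prime $a>p$ gives $(a,p)\in\cD_{\mathrm{pp}}$, because $2<p<a$ and both are prime. Hence $F_p(a,G(a,p))=0$ for every prime $a>p$. This is exactly the hypothesis \cref{eq:H-prime-a} of Theorem~\ref{thm:fixed-p-prime-a} with $H=F_p$, which is nonzero. That theorem yields
\[\deg_YF_p\ge|\cB_p^{\times}|\ge\frac{p-1}{2}>d\ge\deg_YF_p,\]
a contradiction. Therefore no such $F$ exists.

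The substantive work has already been done earlier. First, Theorem~\ref{thm:fixed-p-prime-a} uses Dirichlet's theorem to show that each nonzero residue $r$ contains infinitely many primes $a$, so each branch polynomial $P_{p;I_p(r),J_p(r)}$ is forced to be a root of $H$ in $\CC(X)[Y]$. Second, Lemma~\ref{lem:fiber-two} gives the count $|\cB_p^\times|\ge\frac{p-1}{2}$. The only point needing care here is ensuring $p>2$ so that the pairs $(a,p)$ genuinely lie in $\cD_{\mathrm{pp}}$ and the odd-prime bound from Proposition~\ref{prop:branch-count} applies. This is automatic once $p$ is chosen large, so I expect no real obstacle beyond invoking these results in order.
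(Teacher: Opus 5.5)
Your proposal is correct and is essentially identical to the paper's proof. You specialize $T=p$ at an odd prime with $F_p\neq0$ and $\frac{p-1}{2}>\deg_YF$, then compare the lower bound $\deg_YF_p\ge|\cB_p^{\times}|\ge\frac{p-1}{2}$ from Theorem~\ref{thm:fixed-p-prime-a} with the upper bound $\deg_YF_p\le\deg_YF$ from Lemma~\ref{lem:specialization2}. Requiring $p$ to be odd is needed: when $d=0$, the prime $p=2$ would satisfy $\frac{p-1}{2}>d$, but $(a,2)\notin\cD_{\mathrm{pp}}$.
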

\begin{proof}
Assume, toward a contradiction, that a nonzero polynomial $F$ satisfying $F(a,p,G(a,p))=0$ exists.  Let $d=\deg_YF$.
By Lemma~\ref{lem:specialization2}, the specialization
$F_p(X,Y)=F(X,p,Y)$ is nonzero for all but finitely many $p\in\CC$.
Choose an odd prime $p$ such that $F_p\ne0$ and $\frac{p-1}{2}>d$.
Such a prime exists because the set of exceptional $p$ is finite and there
are arbitrarily large primes.

For every prime $a>p$, the pair $(a,p)$ lies in $\cD_{\mathrm{pp}}$, so $F_p(a,G(a,p))=F(a,p,G(a,p))=0$.
Theorem~\ref{thm:fixed-p-prime-a}, applied to the nonzero polynomial $F_p$,
gives $\deg_YF_p\ge\frac{p-1}{2}$.
Lemma~\ref{lem:specialization2} gives the opposite upper bound $\deg_YF_p\le\deg_YF=d$.
We obtain $d\ge\deg_YF_p\ge\frac{p-1}{2}>d$, which is impossible.  Hence no such $F$ exists.
\end{proof}

\begin{corollary}\label{cor:zariski-prime-prime}
The set $\Gamma_{\mathrm{pp}}:=\{(a,p,G(a,p)):(a,p)\in\cD_{\mathrm{pp}}\} \subseteq\Aff^3_{\CC}$ is Zariski dense in $\Aff^3_{\CC}$.
\end{corollary}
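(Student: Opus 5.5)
This is essentially a restatement of Theorem~\ref{thm:global-prime-prime}, so the plan is to reduce it to that theorem through the definition of Zariski density, exactly as Corollary~\ref{cor:zariski-all-a} was derived from Theorem~\ref{thm:global-all-a}.

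\textbf{Step 1 (unwind the definition).} By Definition~\ref{def:zariski}, the set $\Gamma_{\mathrm{pp}}$ fails to be Zariski dense in $\Aff^3_{\CC}$ precisely when some nonzero polynomial $F\in\CC[X,T,Y]$ vanishes at every point of $\Gamma_{\mathrm{pp}}$. Note that the coordinates $(X,T,Y)$ of $\Aff^3_{\CC}$ correspond to $(a,p,G(a,p))$.

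\textbf{Step 2 (translate the vanishing).} Suppose such an $F$ exists. Vanishing on $\Gamma_{\mathrm{pp}}$ means exactly that $F(a,p,G(a,p))=0$ for every $(a,p)\in\cD_{\mathrm{pp}}$.

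\textbf{Step 3 (invoke the theorem).} Theorem~\ref{thm:global-prime-prime} forbids any nonzero $F$ with this property. This contradiction shows that $\Gamma_{\mathrm{pp}}$ is Zariski dense.

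There is no real obstacle here. The only point needing care is to keep the ordering of the variables consistent between the embedding $\Gamma_{\mathrm{pp}}\subseteq\Aff^3_{\CC}$ and the polynomial ring $\CC[X,T,Y]$ used in Theorem~\ref{thm:global-prime-prime}; with that matched, the argument is immediate. As an aside, since $\cD_{\mathrm{pp}}\subseteq\cD_{\mathrm{all}}$, this result also implies Corollary~\ref{cor:zariski-all-a}.
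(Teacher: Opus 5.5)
Your proposal is correct and matches the paper's proof, which simply notes that the statement is equivalent to Theorem~\ref{thm:global-prime-prime} via Definition~\ref{def:zariski}. Your aside is also right: $\Gamma_{\mathrm{pp}}\subseteq\Gamma_{\mathrm{all}}$, and a superset of a Zariski dense set is dense, so this result also yields Corollary~\ref{cor:zariski-all-a}.
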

\begin{proof}
This is equivalent to Theorem~\ref{thm:global-prime-prime} by Definition~\ref{def:zariski}.
\end{proof}

For each fixed $p$, Theorem~\ref{thm:fixed-p-degree} constructs the explicit nonzero relation $\Psi_p(X,Y)=0$.  Thus it would be false to claim that $a\mapsto G(a,p)$ satisfies no polynomial relation for fixed $p$.
The global theorem is possible because the minimum required $Y$-degree is at least $\lceil p/2\rceil$ and therefore is unbounded as $p$ varies. 
The contrast is: for fixed $p$, finite quadratic quasi-polynomial description exists; for variable $p$, no polynomial relation of bounded degree exists.

\subsection{Zariski closure and corollaries}\label{sec:four-variable}

Let $\mathbb{K}$ be an algebraically closed field and let $\mathbb{A}^n_\mathbb{K}$ denote the affine $n$-space over $\mathbb{K}$. We set $\mathbb{K}[x_1, \dots, x_n]$ for its coordinate ring.
For a subset $\mathcal{S} \subseteq \mathbb{K}[x_1, \dots, x_n]$, the \emph{zero locus} of $\mathcal{S}$ is defined as
\[V(\mathcal{S}) := \left\{ P \in \mathbb{A}^n_\mathbb{K} \mid f(P) = 0 \ \text{for all } f \in \mathcal{S} \right\}.
\]
Conversely, for a subset $\mathcal{T} \subseteq \mathbb{A}^n_\mathbb{K}$, the \emph{vanishing ideal} of $\mathcal{T}$ is defined as
\[I(\mathcal{T}) := \left\{ f \in \mathbb{K}[x_1, \dots, x_n] \mid f(P) = 0 \ \text{for all } P \in \mathcal{T} \right\}.
\]
In particular, for any $\mathcal{T} \subseteq \mathbb{A}^n_\mathbb{K}$, its Zariski closure is given by $\overline{\mathcal{T}} = V(I(\mathcal{T}))$.

\begin{theorem}\label{thm:four-variable-ideal}
Let $\widetilde\Gamma_{\mathrm{pp}}=\left\{\left(a,a+1,p,G(a,p)\right):(a,p)\in\cD_{\mathrm{pp}}\right\}\subseteq\Aff^4_{\CC}$.
Then
\begin{equation}\label{eq:four-variable-ideal}
 I\bigl(\widetilde\Gamma_{\mathrm{pp}}\bigr)=\bigl(X_2-X_1-1\bigr).
\end{equation}
Equivalently, the Zariski closure of $\widetilde\Gamma_{\mathrm{pp}}$ is the hyperplane $V(X_2-X_1-1)\subseteq\Aff^4_{\CC}$.
\end{theorem}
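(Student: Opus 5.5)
The plan is to prove the two inclusions of ideals separately. First, $X_2-X_1-1$ vanishes at every point $(a,a+1,p,G(a,p))$ of $\widetilde\Gamma_{\mathrm{pp}}$. Hence the ideal $(X_2-X_1-1)$ is contained in $I(\widetilde\Gamma_{\mathrm{pp}})$.

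For the reverse inclusion, let $P\in\CC[X_1,X_2,X_3,X_4]$ vanish on $\widetilde\Gamma_{\mathrm{pp}}$. Regard $P$ as a polynomial in $X_2$ over $\CC[X_1,X_3,X_4]$ and divide by the monic linear polynomial $X_2-(X_1+1)$. This gives
\[P=(X_2-X_1-1)Q+R,\qquad R:=P(X_1,X_1+1,X_3,X_4)\in\CC[X_1,X_3,X_4].\]
The remainder $R$ is exactly the substitution $X_2=X_1+1$, so it is a polynomial in the remaining three variables.

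Evaluating at a point of $\widetilde\Gamma_{\mathrm{pp}}$ kills the first term, since $X_2-X_1-1$ vanishes there. Therefore
\[R\bigl(a,p,G(a,p)\bigr)=P\bigl(a,a+1,p,G(a,p)\bigr)=0\quad\text{for all }(a,p)\in\cD_{\mathrm{pp}}.\]
Theorem~\ref{thm:global-prime-prime}, with $X=X_1$, $T=X_3$, $Y=X_4$, then forces $R=0$. Hence $P\in(X_2-X_1-1)$, and the two ideals are equal.

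For the equivalent formulation, use $\overline{\widetilde\Gamma_{\mathrm{pp}}}=V(I(\widetilde\Gamma_{\mathrm{pp}}))$. This equals $V(X_2-X_1-1)$, which is the stated hyperplane.

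There is no real obstacle here. The only care needed is to check that the division step is legitimate, which it is because the divisor is monic in $X_2$ with coefficients in $\CC[X_1,X_3,X_4]$. One should also match the variable roles correctly when invoking Theorem~\ref{thm:global-prime-prime}.
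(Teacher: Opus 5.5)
Your proposal is correct and follows the paper's own proof essentially step for step. The steps match: the easy inclusion, restricting to the hyperplane via $X_2=X_1+1$, invoking Theorem~\ref{thm:global-prime-prime} to force that restriction to vanish, and concluding by division by the polynomial $X_2-X_1-1$, which is monic in $X_2$. The only cosmetic difference is ordering, since you divide first and identify the remainder with the substitution, whereas the paper first shows the substitution is zero and then divides.
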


\begin{proof}
Every point of $\widetilde\Gamma_{\mathrm{pp}}$ has second coordinate equal
to its first coordinate plus one.  Therefore $X_2-X_1-1$ vanishes on the set, and hence
\begin{equation}\label{eq:easy-ideal-inclusion}
 (X_2-X_1-1)\subseteq I(\widetilde\Gamma_{\mathrm{pp}}).
\end{equation}

For the reverse inclusion, let $H(X_1,X_2,T,Y)\in I(\widetilde\Gamma_{\mathrm{pp}})$.
Define its restriction to the hyperplane by
\begin{align*}
   \widehat H(X,T,Y):=H(X,X+1,T,Y)\in\CC[X,T,Y].
\end{align*}
For every $(a,p)\in\cD_{\mathrm{pp}}$, we have $\widehat H(a,p,G(a,p))=H(a,a+1,p,G(a,p))=0$.
By Theorem~\ref{thm:global-prime-prime}, the only polynomial in
$\CC[X,T,Y]$ with this property is the zero polynomial.  Hence
\begin{equation}\label{eq:Hhat-zero}
   \widehat H(X,T,Y)=0.
\end{equation}

We now show that \cref{eq:Hhat-zero} is equivalent to divisibility by
$X_2-X_1-1$.  Regard $H$ as a polynomial in the single variable $X_2$ with
coefficients in the ring $R:=\CC[X_1,T,Y]$.
Since $X_2-(X_1+1)=X_2-X_1-1$ is monic in $X_2$, the division algorithm in $R[X_2]$ gives unique
polynomials $Q\in R[X_2]$ and $R_0\in R$ such that
\begin{equation}\label{eq:division-H}
   H=(X_2-X_1-1)Q+R_0,
\end{equation}
where the remainder $R_0$ is independent of $X_2$.  Substituting
$X_2=X_1+1$ into \cref{eq:division-H} yields $H(X_1,X_1+1,T,Y)=R_0(X_1,T,Y)$.
The left-hand side is zero by \cref{eq:Hhat-zero}.  Therefore $R_0=0$, and
\cref{eq:division-H} shows that $H\in(X_2-X_1-1)$.
Thus $I(\widetilde\Gamma_{\mathrm{pp}})\subseteq(X_2-X_1-1)$.
Combining this with \cref{eq:easy-ideal-inclusion} proves \cref{eq:four-variable-ideal}.

The statement about the Zariski closure follows because the vanishing ideal of the hyperplane $V(X_2-X_1-1)$ is exactly the principal ideal generated by $X_2-X_1-1$.
\end{proof}

\begin{corollary}\label{cor:no-finite-poly-reduced}
There do not exist polynomials
$f_1,\ldots,f_m\in\CC[X,T]$ such that, for every $(a,p)\in\cD_{\mathrm{pp}}$, at least one index $i$ satisfies $G(a,p)=f_i(a,p)$.
The same conclusion holds with $\cD_{\mathrm{pp}}$ replaced by $\cD_{\mathrm{all}}$ or by the full set of integer pairs $a>p\ge2$.
\end{corollary}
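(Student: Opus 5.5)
The plan is to reduce the corollary to Theorem~\ref{thm:global-prime-prime} by the same product construction used in the proof of Corollary~\ref{cor:finite-poly-intro}.

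Suppose, for contradiction, that polynomials $f_1,\ldots,f_m\in\CC[X,T]$ have the stated property on $\cD_{\mathrm{pp}}$. Form
\[
P(X,T,Y):=\prod_{i=1}^{m}\bigl(Y-f_i(X,T)\bigr)\in\CC[X,T,Y].
\]
Each factor is monic of degree one in $Y$. Hence $P$ is monic of degree $m$ in $Y$, and in particular $P\neq0$.

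Now fix $(a,p)\in\cD_{\mathrm{pp}}$. By hypothesis there is an index $i$ with $f_i(a,p)=G(a,p)$, so the $i$th factor vanishes at $(a,p,G(a,p))$. Therefore $P(a,p,G(a,p))=0$ for every $(a,p)\in\cD_{\mathrm{pp}}$. This is a nonzero polynomial relation, which contradicts Theorem~\ref{thm:global-prime-prime}.

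For the remaining assertions I will use monotonicity in the domain. Both $\cD_{\mathrm{all}}$ and the set of all integer pairs $a>p\ge2$ contain $\cD_{\mathrm{pp}}$. So a finite polynomial description valid on either larger set restricts to one valid on $\cD_{\mathrm{pp}}$, and the first part then gives a contradiction. For $\cD_{\mathrm{all}}$ one could alternatively cite Theorem~\ref{thm:global-all-a} directly.

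There is no real obstacle here. The only points to check are that $P$ is genuinely nonzero, which follows because it is monic in $Y$, and that the domains are nested as claimed, which holds because $\cD_{\mathrm{pp}}\subseteq\cD_{\mathrm{all}}\subseteq\{(a,p):a>p\ge2\}$.
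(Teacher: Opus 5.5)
Your proposal is correct and follows the paper's proof: the same monic product $\prod_{i=1}^m\bigl(Y-f_i(X,T)\bigr)$ vanishes on the graph over $\cD_{\mathrm{pp}}$, which contradicts Theorem~\ref{thm:global-prime-prime}. Your treatment of the larger domains via the nesting $\cD_{\mathrm{pp}}\subseteq\cD_{\mathrm{all}}\subseteq\{(a,p):a>p\ge2\}$ supplies a step the paper's proof leaves implicit.
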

\begin{proof}
Suppose such polynomials exist. Define $F(X,T,Y):=\prod_{i=1}^m\bigl(Y-f_i(X,T)\bigr)$.
Each factor is monic of degree one in $Y$, so their product is a nonzero
polynomial in $\CC[X,T,Y]$.  For every $(a,p)\in\cD_{\mathrm{pp}}$, at least
one equality in $G(a,p)=f_i(a,p)$ holds.  The corresponding
factor in $F(X,T,Y)$ is zero at $(a,p,G(a,p))$.  Hence $F(a,p,G(a,p))=0$ for every $(a,p)\in\cD_{\mathrm{pp}}$.  This contradicts Theorem~\ref{thm:global-prime-prime}.
This completes the proof.
\end{proof}

\begin{corollary}\label{cor:no-finite-rational}
There do not exist finitely many rational functions
\[R_i(X,T)=\frac{A_i(X,T)}{B_i(X,T)}\in\CC(X,T),\qquad B_i\ne0,
\]
with the property that, for every $(a,p)\in\cD_{\mathrm{pp}}$, at least one
$R_i$ is defined at $(a,p)$ and satisfies $G(a,p)=R_i(a,p)$.
\end{corollary}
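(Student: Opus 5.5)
The plan is to mirror the proof of Corollary~\ref{cor:rational}, with Theorem~\ref{thm:global-prime-prime} as the contradiction engine. Suppose, for contradiction, that rational functions $R_i=A_i/B_i\in\CC(X,T)$, $i=1,\ldots,m$, have the stated property. Each $A_i,B_i\in\CC[X,T]$ and $B_i\neq0$. Form
\[
F(X,T,Y):=\prod_{i=1}^{m}\bigl(B_i(X,T)Y-A_i(X,T)\bigr)\in\CC[X,T,Y].
\]

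\textbf{Step 1: $F\neq0$.} Each factor has $Y$-coefficient $B_i\neq0$, so each factor is a nonzero polynomial. Since $\CC[X,T,Y]$ is an integral domain, the product is nonzero.

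\textbf{Step 2: $F$ vanishes on the graph.} Fix $(a,p)\in\cD_{\mathrm{pp}}$ and choose an index $i$ for which $R_i$ is defined at $(a,p)$ and $R_i(a,p)=G(a,p)$. The one point needing care is the meaning of ``defined''. I take it to mean that there is a representation $A_i/B_i$ with $B_i(a,p)\neq0$. To avoid ambiguity, I would fix each $R_i$ in lowest terms, using that $\CC[X,T]$ is a UFD, before forming $F$. Then $B_i(a,p)\neq0$ and $B_i(a,p)G(a,p)-A_i(a,p)=0$, so the $i$th factor of $F$ vanishes at $(a,p,G(a,p))$. Hence $F(a,p,G(a,p))=0$ for every $(a,p)\in\cD_{\mathrm{pp}}$.

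\textbf{Step 3: contradiction.} Steps~1 and~2 produce a nonzero $F\in\CC[X,T,Y]$ vanishing at $(a,p,G(a,p))$ for all $(a,p)\in\cD_{\mathrm{pp}}$. This contradicts Theorem~\ref{thm:global-prime-prime}, which finishes the proof.

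I do not expect a genuine obstacle. The only subtle point is the convention for ``defined at $(a,p)$'', which is handled by the reduced-form choice in Step~2. Even without that choice, any representation with $B_i(a,p)\neq0$ works verbatim, because Step~1 uses only $B_i\neq0$ as a polynomial.
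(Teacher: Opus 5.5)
Your proposal is correct and follows the paper's proof essentially verbatim. Like the paper, you form $\prod_i\bigl(B_iY-A_i\bigr)$, note it is nonzero because $\CC[X,T,Y]$ is an integral domain, check that it vanishes at $(a,p,G(a,p))$ for all $(a,p)\in\cD_{\mathrm{pp}}$, and contradict Theorem~\ref{thm:global-prime-prime}. Your extra step of putting each $R_i$ in lowest terms is a harmless refinement: it makes ``defined at $(a,p)$'' independent of the chosen representation, since the reduced denominator divides any other denominator.
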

\begin{proof}
Suppose such rational functions exist.  Define $F(X,T,Y):=\prod_{i=1}^m\bigl(B_i(X,T)Y-A_i(X,T)\bigr)$.
Every factor is nonzero because $B_i\ne0$, and $\CC[X,T,Y]$ is an integral domain.  Hence $F$ is nonzero.
At any $(a,p)\in\cD_{\mathrm{pp}}$, choose an index $i$ for which
$B_i(a,p)\ne0$ and $G(a,p)=\frac{A_i(a,p)}{B_i(a,p)}$.
Then $B_i(a,p)G(a,p)-A_i(a,p)=0$, so the corresponding factor in $F(X,T,Y)$ vanishes.
Therefore $F(a,p,G(a,p))=0$ for every $(a,p)\in\cD_{\mathrm{pp}}$, contradicting
Theorem~\ref{thm:global-prime-prime}.
\end{proof}

Similarly, the results do not say that $G(a,p)$ is uncomputable or that no expression using floor functions, least residues, minimization, or Euclidean algorithms can exist.  The exact formula in Theorem~\ref{thm:exact-formula} is already an effective formula of that broader kind.  What is ruled out is a single algebraic relation of bounded degree, and therefore any finite family
of polynomial or rational branches independent of $p$.

\section{Concluding remarks}\label{Section-Finily-CR}

Curtis's polynomial nonexistence theorem \cite{F.Curtis} for $\gen{a,b,c}$ constructs infinitely many semigroups on which the Frobenius number is linear in two of the generators, then uses points at infinity and a degree argument to rule out a global polynomial relation.  In the quotient setting, the symmetry of $\gen{a,b}$ first reduces the problem to the least element of a prescribed congruence class.  The resulting formula is bilinear in $a$ and $b$.

The discussion of the non-existence theorems for the polynomial formulas of the Frobenius numbers $g\!\left(\frac{\gen{a,b}}{p}\right)$ and $g\!\left(\frac{\gen{a,a+1}}{p}\right)$ differs.
The specialization $b=a+1$ changes both the geometry and the arithmetic of the Frobenius number of a quotient of a numerical semigroup.

First, the parameter space loses one dimension.  Consequently, the original four-variable nonexistence statement cannot survive verbatim: the graph is contained in the hyperplane $b-a-1=0$.  Theorem \ref{thm:four-variable-ideal} proves that the graph is nevertheless Zariski dense inside that hyperplane. 

Second, the local structure becomes more explicit.  For each fixed $p$, the
function $a\mapsto G(a,p)$ is not merely computable but is a quadratic
quasi-polynomial, with determined by the finite residue procedure
in Definition~\ref{def:IJ}.  This is stronger local information than one has
for a general pair $(a,b)$.

Third, this local simplification does not produce a finite algebraic description as $p$ varies. Even under the restriction that both 
$a$ and $p$ are prime, we prove that there is no polynomial formula for the Frobenius number $g\!\left(\frac{\gen{a,a+1}}{p}\right)$.

Nevertheless, finding closed-form formulas for the Frobenius numbers of quotients of certain special numerical semigroups remains an interesting research direction.






\noindent
{\small \textbf{Acknowledgments:}}
The author would also like to express sincere gratitude for all the suggestions that have improved the presentation of this paper.

\end{document}